\numberwithin{equation}{section}
\newtheorem{theorem}{Theorem}[section]
\newtheorem{lemma}[theorem]{Lemma}
\newtheorem{proposition}[theorem]{Proposition}
\theoremstyle{remark}
\newtheorem{remark}[theorem]{Remark}
\theoremstyle{definition}
\theoremstyle{theorem}
\newtheorem{TheoremA}{Theorem}
\newtheorem*{maintheorem}{Main Theorem}
\newtheorem*{Question}{Question}
\newcommand{\ow}{\omega}
\newcommand{\p}{\partial}
\newcommand{\C}{{\mathbb{C}}}
\newcommand{\R}{{\mathbb{R}}}
\newcommand{\Z}{{\mathbb{Z}}}
\newcommand{\N}{{\mathbb{N}}}
\renewcommand{\epsilon}{\varepsilon}
\renewcommand{\theta}{\vartheta}
\DeclareMathOperator{\ev}{ev}
\DeclareMathOperator{\reg}{reg}
\DeclareMathOperator{\crit}{crit}
\DeclareMathOperator{\End}{End}
\DeclareMathOperator{\dvol}{dvol}
\DeclareMathOperator{\Inj}{Inj}
\DeclareMathOperator{\Int}{int}
\DeclareMathOperator{\virdim}{vir-dim}
\DeclareMathOperator{\pt}{pt}
\DeclareMathOperator{\Cl}{Clif}
\DeclareMathOperator{\Ch}{Chek}
\DeclareMathOperator{\id}{id}
\DeclareMathOperator{\im}{Im}
\DeclareMathOperator{\ind}{ind}
\DeclareMathOperator{\Fix}{Fix}
\DeclareMathOperator{\Aut}{Aut}
\begin{document}
\title{{
\bf
\Large
The Chekanov torus in $S^2\times S^2$ is not real
}}

\makeatletter
\newcommand{\subjclass}[2][2010]{%
  \let\@oldtitle\@title%
  \gdef\@title{\@oldtitle\footnotetext{#1 \emph{Mathematics Subject Classification.} #2.}}%
}
\newcommand{\keywords}[1]{%
  \let\@@oldtitle\@title%
  \gdef\@title{\@@oldtitle\footnotetext{\emph{Key words and phrases.} #1.}}%
}
\newcommand{\Date}[1]{%
  \let\@@@oldtitle\@title%
\gdef\@title{\@@@oldtitle\footnotetext{\emph{Date.} #1.}}%
}
\makeatother

\author
{
Joontae Kim
}
\date{}

\setcounter{tocdepth}{2}

\maketitle

\begin{abstract}
We prove that the count of Maslov index 2 $J$-holomorphic discs passing through a generic point of a real Lagrangian submanifold with minimal Maslov number at least two in a closed spherically monotone symplectic manifold must be even. As a corollary, we exhibit a genuine real symplectic phenomenon in terms of involutions, namely that the Chekanov torus $\mathbb{T}_{\Ch}$ in $S^2\times S^2$, which is a monotone Lagrangian torus not Hamiltonian isotopic to the Clifford torus $\mathbb{T}_{\Cl}$, can be seen as the fixed point set of a \emph{smooth} involution, but not of an \emph{antisymplectic} involution.
\end{abstract}

\section{Introduction and main result}
An even dimensional smooth manifold $M$ equipped with a closed non-degenerate 2-form $\ow$ is called a \emph{symplectic} manifold. An important class of submanifolds in a symplectic manifold $(M,\ow)$ are \emph{Lagrangian} submanifolds, namely those of middle dimension along which the symplectic form $\ow$ vanishes.

In symplectic topology, the study of Lagrangian submanifolds is a central topic; in particular, monotone Lagrangians exhibit rigidity phenomena. A Lagrangian $L$ in $M$ is called \emph{monotone} if there exists $K>0$ such that for all $\beta\in \pi_2(M,L)$ we have $\mu(\beta)=K\cdot \ow(\beta)$, where $\mu\colon \pi_2(M,L)\to \Z$ denotes the Maslov class of $L$. The \emph{minimal Maslov number} $N_L$ of a Lagrangian $L$ is defined as the non-negative integer $N_L$ satisfying $\mu\big(\pi_2(M,L)\big)=N_L\cdot \Z$.
A~typical example of a monotone Lagrangian is an embedded loop in $S^2$ dividing the sphere into two discs of equal area. A symplectic manifold $(M,\ow)$ containing a monotone Lagrangian is necessarily \emph{spherically monotone}, namely there exists $C>0$ such that for all $\alpha\in \pi_2(M)$ we have $c_1(M)[\alpha]=C\cdot \ow(\alpha)$.

We are interested in a special class of Lagrangians, called \emph{real Lagran\-gians}. A smooth involution $R$ on a symplectic manifold $(M,\ow)$ is called \emph{antisymplectic} if it satisfies $R^*\ow=-\ow$. Its fixed point set $\Fix(R)=\{x\in M\mid R(x)=x\}$ is a Lagrangian submanifold if it is nonempty. A \emph{real} Lagrangian in a symplectic manifold is a Lagrangian that is the fixed point set of an antisymplectic involution. Any real Lagrangian in a spherically monotone symplectic manifold is monotone. Historically, real Lagrangians first appeared in \cite[Theorem~C]{Givental} in the framework of the \emph{Arnold--Givental conjecture}.

In this paper, we study the following question in real symplectic topology.
\begin{Question}
Is there a monotone Lagrangian submanifold in a closed symplectic manifold that is the fixed point set of a \emph{smooth} involution, while not of an \emph{antisymplectic} involution?
\end{Question}
If we drop the condition that the Lagrangian is monotone, then a  favorite example of symplectic topologists, namely, the sphere $S^2$, immediately answers the above question. Indeed, consider any embedded loop $L$ as a Lagrangian in~$S^2$ which divides the sphere into two discs of \emph{different} areas. Since $L$ is not monotone, it is not real. Hence, $L$ cannot be the fixed point set of an \emph{antisymplectic} involution on $S^2$, while $L$ is the fixed point set of a \emph{smooth} involution, namely, the reflection interchanging the two discs. Taking the product with itself, we find a (non-monotone) Lagrangian in $S^2\times S^2$ that is the fixed point set of a smooth involution and that cannot be real.

To answer the above question in the monotone case, we need a \emph{symplectic} invariant that can serve as an obstruction to a monotone Lagrangian being real. For this purpose, we employ the mod 2 count of Maslov index~2 $J$-holomorphic discs passing through a generic point in a monotone Lagrangian $L$, which is denoted by $n(L)\in \Z_2$.  
This invariant was first used by Chekanov \cite[Section~2]{ChekDisc} and Eliashberg--Polterovich \cite[Proposition~4.1.A]{EliashPolt} based on Gromov's $J$-holomorphic discs \cite{Gromov}. This invariant is defined when the minimal Maslov number is at least 2, see Section \ref{sec: disccount} for the precise definition. In Section \ref{sec: symmetriccount}, we show that the count of such discs must be even when the Lagrangian is real, and hence we obtain the following main result of this paper.
\begin{maintheorem}
Any real Lagrangian $L$ with minimal Maslov number $N_L\ge 2$ in a closed spherically monotone symplectic manifold satisfies $n(L)=0$.
\end{maintheorem}
As an immediate corollary, we obtain a \emph{symplectic obstruction} for being a real Lagrangian; if a monotone Lagrangian $L$ satisfies $n(L)=1$, then it is not real. It is worth mentioning that the main theorem also holds for \emph{any connected component} of the real Lagrangian.

To find an example answering the question above, consider $S^2\times S^2$ equipped with the symplectic form $\ow\oplus \ow$, where $\ow$ denotes a Euclidean area form on $S^2$. The \emph{Clifford torus} $\mathbb{T}_{\Cl}=S^1\times S^1$ in $S^2\times S^2$ is the real Lagrangian torus defined as the product of the equators in each $S^2$-factor. The \emph{Chekanov torus} $\mathbb{T}_{\Ch}$ in~$S^2\times S^2$ is a monotone Lagrangian torus with minimal Maslov number $N_{\mathbb{T}_{\Ch}}=2$, which is not Hamiltonian isotopic but Lagrangian isotopic to the Clifford torus $\mathbb{T}_{\Cl}$. There are several equivalent ways to define the Chekanov torus $\mathbb{T}_{\Ch}$ in $S^2\times S^2$ up to Hamiltonian isotopy, see \cite{AF}, \cite[Theorem~1.1]{FOOO}, \cite[Section~3]{ChekanovSchlenk}, \cite[Section~4.2]{Gad}, and \cite[Example~1.22]{EP}. We refer to \cite[Theorem~1.1]{Usher} for the discussion of these descriptions and \cite[Section~3]{Chekanov} for the original construction of the Chekanov torus in $\R^{2n}$. 

It is known that $n(\mathbb{T_{\Ch}})=1$, see \cite[Lemmata~5.2 and 5.3]{ChekanovSchlenk}. 
Since $\mathbb{T}_{\Ch}$ is Lagrangian isotopic to the Clifford torus $\mathbb{T}_{\Cl}$, it is the fixed point set of a smooth involution, see Lemma \ref{lem: smoothinvolution}. As an application of the main theorem, it follows that the answer to the above question is yes. This is a genuine \emph{real symplectic phenomenon} in terms of \emph{involutions}.
\begin{TheoremA}\label{thm: thmA}
The Chekanov torus $\mathbb{T}_{\Ch}$ in $S^2\times S^2$ is the fixed point set of a smooth involution but not of an antisymplectic involution.
\end{TheoremA}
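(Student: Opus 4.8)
The plan is to split the statement into its two assertions: the first is a soft topological observation, the second a direct consequence of the Main Theorem.

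\emph{The smooth involution.} The Clifford torus $\mathbb{T}_{\Cl}\subset S^2\times S^2$ is the fixed point set of the antisymplectic involution $R_0:=c\times c$, where $c\colon S^2\to S^2$ is the reflection (complex conjugation on $S^2\cong\mathbb{CP}^1$) whose fixed locus is the equator, so that $(c\times c)^*(\ow\oplus\ow)=-(\ow\oplus\ow)$. Since $\mathbb{T}_{\Ch}$ is Lagrangian --- in particular smoothly --- isotopic to $\mathbb{T}_{\Cl}$, the interpolating family of embedded tori is a smooth isotopy of submanifolds of the closed manifold $S^2\times S^2$; by the isotopy extension theorem it extends to an ambient smooth isotopy $(\phi_t)_{t\in[0,1]}$ with $\phi_0=\id$ and $\phi_1(\mathbb{T}_{\Cl})=\mathbb{T}_{\Ch}$. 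Then $R:=\phi_1\circ R_0\circ\phi_1^{-1}$ is a smooth involution with $\Fix(R)=\phi_1\big(\Fix(R_0)\big)=\mathbb{T}_{\Ch}$; this is the content of Lemma~\ref{lem: smoothinvolution}. Note that the ambient isotopy $\phi_t$ cannot be chosen symplectic --- that would make $\mathbb{T}_{\Ch}$ Hamiltonian isotopic to $\mathbb{T}_{\Cl}$ --- so there is no reason for the resulting $R$ to be antisymplectic, and by the second half below it can in fact never be made so.

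\emph{No antisymplectic involution.} Here I would argue by contradiction: suppose $\mathbb{T}_{\Ch}=\Fix(R')$ for some antisymplectic involution $R'$ of $(S^2\times S^2,\ow\oplus\ow)$, i.e.\ that $\mathbb{T}_{\Ch}$ is a real Lagrangian. The ambient manifold is closed and monotone, hence spherically monotone, and the Chekanov torus has minimal Maslov number $N_{\mathbb{T}_{\Ch}}=2\ge 2$, so all the hypotheses of the Main Theorem are satisfied; therefore $n(\mathbb{T}_{\Ch})=0$. This contradicts the known computation $n(\mathbb{T}_{\Ch})=1$ of \cite[Lemmata~5.2 and 5.3]{ChekanovSchlenk}. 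Hence no such $R'$ exists, which together with the first half proves the theorem.

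The only point requiring care in Theorem~\ref{thm: thmA} itself is the bookkeeping: one should confirm that the disc count $n(\mathbb{T}_{\Ch})$ in the sense of Section~\ref{sec: disccount} agrees with the quantity computed in \cite{ChekanovSchlenk}, and that the spherical monotonicity and minimal-Maslov hypotheses genuinely apply (they do, since $N_{\mathbb{T}_{\Ch}}=2$ and $S^2\times S^2$ is Fano). The genuine mathematical obstacle lies upstream, in the Main Theorem: one must show that an antisymplectic involution fixing $L$ induces, for a suitably $R$-anti-invariant almost complex structure $J$, an involution on the moduli space of Maslov index~2 $J$-holomorphic discs through a symmetric generic point of $L$, and that this induced involution is free, so that the disc count is even. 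That argument, carried out in Section~\ref{sec: symmetriccount}, is where the real work happens; granting it, Theorem~\ref{thm: thmA} follows immediately as above.
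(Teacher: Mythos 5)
Your proposal is correct and follows essentially the same route as the paper: the smooth involution is obtained exactly as in Lemma~\ref{lem: smoothinvolution} (Lagrangian isotopy to $\mathbb{T}_{\Cl}$, isotopy extension, conjugation of the standard antireflection), and the non-reality is the Main Theorem against the Chekanov--Schlenk computation $n(\mathbb{T}_{\Ch})=1$. Nothing further is needed.
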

This result gives rise to another question. Every (not necessarily real) Lagrangian torus in $S^2\times S^2$ is Lagrangian isotopic to $\mathbb{T}_{\Cl}$, see \cite[Theorem~A]{DGI}. Since the Clifford torus $\mathbb{T}_{\Cl}$ is real, it is natural to ask if exotic monotone Lagrangian tori in $S^2\times S^2$ are real. Here \emph{exotic} means that it is not Hamiltonian isotopic to $\mathbb{T}_{\Cl}$. Theorem \ref{thm: thmA} tells us that the \emph{first} exotic Lagrangian torus, namely, the Chekanov torus, is not real. It is now tempting to conjecture that any real Lagrangian torus in $S^2\times S^2$ is Hamiltonian isotopic to the Clifford torus~$\mathbb{T}_{\Cl}$. This conjecture was recently proved in \cite{Kim2}.
We refer to \cite[Theorem~A]{Kim2} and \cite[Proposition~B]{Kim} for information on the classification of real Lagrangian submanifolds in $S^2\times S^2$.

We briefly outline the proof of the main theorem. Suppose that $L=\Fix(R)$ is a real Lagrangian. Any $\ow$-compatible almost complex structure~$J$ on $M$ which is anti-$R$-invariant together with the antisymplectic involution $R$ defines an involution $\mathcal{R}_0$ on the moduli space of Maslov index 2 $J$-holomorphic discs with boundary on $L$. In Section \ref{sec: nofixedpointonmoduli}, we show that the involution $\mathcal{R}_0$ has no fixed points, which implies that the count of the discs must be even. The count has to be done for an anti-$R$-invariant $J$ which is \emph{regular} to guarantee that the moduli space is \emph{transversely} cut out. This is not obvious, but since $\mathcal{R}_0$ has no fixed points, we achieve \emph{equivariant transversality} in Section \ref{sec: eqtrans}, following ideas of Khovanov--Seidel \cite[Section~5c]{SeidelQuiv}. See also \cite[Section~14c]{SeidelBook}.

Recently, Brendel \cite{Brendel} found a different symplectic obstruction for being a real Lagrangian in \emph{almost toric} symplectic manifolds, using the so-called \emph{versal deformations} developed in \cite{Chekanov}. This invariant is less technical and easily computable. Nevertheless, the symplectic obstruction in terms of $J$-holomorphic discs is still interesting in its own right due to its relation with open Gromov--Witten invariants, and it is applicable to a wider class of symplectic manifolds.

\subsection*{Organization of the paper}
In Section \ref{sec: disccount}, we define the mod 2 count of Maslov index 2 $J$-holomorphic discs, as our main tool. In Section \ref{sec: symmetriccount}, we establish results that will be needed to prove the main theorem. In Section \ref{sec: proofofmainthm}, we prove the main theorem and Theorem \ref{thm: thmA}. Finally, we provide an appendix in Section \ref{sec: appendix}, containing (i) some properties of $J$-holomorphic discs that are mainly used to prove Lemma \ref{lem: inj_Rdenseopen}, (ii) a simple proof of the main theorem for $S^2\times S^2$, and (iii) an alternative proof of equivariant transversality using the doubling construction of Hofer--Lizan--Sikorav.

\section{Mod 2 count of Maslov index 2 $J$-holomorphic discs}\label{sec: disccount}
We give a quick review on the mod 2 count of Maslov index 2 $J$-holomorphic discs passing through a generic point in a Lagrangian $L$. For more details we refer to \cite[Section~2]{ChekDisc}  and also to the more recent expositions \cite[Section~3.1]{Auroux} and \cite[Section~6.1]{Vian2}.

Let $(M,\ow)$ be a closed spherically monotone symplectic manifold and let~$L$ be a monotone Lagrangian in $M$ with minimal Maslov number $N_L=2$. We abbreviate by $\mathcal{J}$ the \emph{space of $\ow$-compatible almost complex structures on~$M$} endowed with the $C^\infty$-topology. Fix $J\in \mathcal{J}$. We let
\begin{equation}\label{eq: spaceofJdiscs}
\widehat{\mathcal{M}}(L,J)=\big\{u\colon (D^2,\p D^2)\to (M,L) \mid \bar{\p}_J(u)=0,\ \mu(u)=2\big\}	
\end{equation}
be the \emph{space of Maslov index 2 $J$-holomorphic discs with boundary on $L$}. Here $\bar{\p}_J(u)=\frac{1}{2}(du+J\circ du\circ i)$ denotes the complex antilinear part of $du$. We abbreviate by
$$
\mathcal{M}_1(L,J) = \widehat{\mathcal{M}}(L,J)\big/\Aut(D^2,1)
$$
the \emph{moduli space of Maslov index 2 $J$-holomorphic discs with boundary on~$L$ with one boundary marked point}, where $\Aut(D^2,1)$ is the group of biholomorphisms of the closed unit disc $D^2\subset \C$ (smooth up to the boundary) fixing $1\in \p D^2$.

A $J$-holomorphic disc $u\colon (D^2,\p D^2)\to (M,L)$ is called \emph{simple} if the \emph{set of injective points of $u$}
$$
\Inj(u):=\big\{z\in D^2\mid du(z)\ne 0,\ u^{-1}(u(z))=\{z\}\big\}
$$
is dense in $D^2$. This set is always open, see for instance \cite[Proposition~2.2]{ZehAnnulus}. A result of Lazzarini \cite[Theorem~A]{Laz} implies that if the homotopy class of a $J$-holomorphic disc $u\colon (D^2,\p D^2)\to (M,L)$ in $\pi_2(M,L)$ is indecomposable (by means of simple $J$-holomorphic discs), then $u$ is simple. See also \cite[Lemma~6]{GeigesKai} where the same argument is used. Since $N_L>0$ it follows that every $J$-holomorphic disc $u$ with $\mu(u)=N_L$ is simple. 

The classical transversality theorem \cite[Section~3.2]{McSalJholo} asserts that for generic $J\in \mathcal{J}$ the moduli space $\mathcal{M}_1(L,J)$ is a smooth manifold of dimension~$n$. Indeed, such a $J$ is \emph{regular}, see Section \ref{sec: eqtrans} for the definition. Since every $u\in \widehat{\mathcal{M}}(L,J)$ satisfies $\mu(u)=2$ which is the smallest possible positive Maslov index and since $L$ is monotone, no bubbling of discs or spheres can occur a priori. By Gromov's compactness theorem \cite{UrsGromov}, $\mathcal{M}_1(L,J)$ is a compact smooth manifold \emph{without} boundary.

For regular $J\in \mathcal{J}$ the mod 2 Brouwer degree of the  evaluation map
$$
\ev\colon \mathcal{M}_1(L,J) \to L,\quad \ev[u]=u(1)
$$
is called the \emph{mod 2 count of Maslov index 2 $J$-holomorphic discs} passing through a generic point in $L$, and is denoted by
$$
n(L,J):=\deg_2(\ev)\in \Z_2.
$$
By a standard cobordism argument as in \cite[Section~2.2]{ChekDisc} or \cite[Theorem~6.6.1]{McSalJholo}, the value $n(L,J)\in \Z_2$ is independent of the choice of regular~$J$. In the sequel we suppress $J$ in the notation. Note that $n(L)\in \Z_2$ is given by the mod 2 cardinality of the finite set $\ev^{-1}(x)$ for any regular value $x\in L$ of~$\ev$, see \cite[p.~24]{Milnor}. By convention, if $\mathcal{M}_1(L,J)$ is empty (for example, when $N_L\ge 3$) we set $n(L)=0$.

\begin{remark}
It is worth noting that this invariant can be \emph{enumerative}, i.e., $n(L)$ takes values in $\Z$, if the Lagrangian $L$ is oriented and spin. A spin structure determines an orientation of $\mathcal{M}_1(L,J)$ and hence we can use the integer degree of the evaluation map. See \cite[Section~5]{Cho} and \cite[Section~5.5]{Vian2}. 
Moreover, $n(L)$ is invariant under symplectomorphisms of $L$, i.e., $n(L)=n(\phi(L))$ for any symplectomorphism $\phi$, although this fact is not used in the paper.
\end{remark}

\section{Symmetric count for real Lagrangians}\label{sec: symmetriccount}
We exhibit that the count happens \emph{symmetrically} when the Lagrangian is real. With the notation from Section \ref{sec: disccount}, we assume that the Lagrangian $L$ is real. Hence, we write $L=\Fix(R)$ for some antisymplectic involution $R$ on~$M$.
\subsection{Involution on moduli spaces}\label{sec: involutiononmoduli}
Fix $J\in \mathcal{J}$ which is \emph{anti-$R$-invariant}, i.e., $J=-R^*J:=-R_*JR_*$. We write $\mathcal{J}_R$ for the \emph{space of anti-$R$-invariant $\ow$-compatible almost complex structures on~$M$}. Note that $\mathcal{J}_R$ is nonempty and contractible, see \cite[Proposition~1.1]{Welsch}. For a technical reason, we deal with the \emph{moduli space of Maslov index 2 $J$-holomorphic discs with boundary on $L$},
$$
\mathcal{M}_0(L,J) = \widehat{\mathcal{M}}(L,J)\big/\Aut(D^2),
$$
where $\Aut(D^2)$ is the group of biholomorphisms of the unit disc $D^2\subset \C$. Since $J\in \mathcal{J}_R$, we can consider the involution
\begin{equation}\label{eq: involutiononmoduli}
\widehat{\mathcal{R}}\colon \widehat{\mathcal{M}}(L,J)\to \widehat{\mathcal{M}}(L,J),\quad u\mapsto R\circ u \circ \rho,	
\end{equation}
where $\rho\colon D^2\to D^2$ denotes complex conjugation on $D^2\subset \C$. 
\begin{lemma}
The involution $\mathcal{R}_j$ on $\mathcal{M}_j(L,J)$ defined by
$$
\mathcal{R}_j[u]:=\big[\widehat{\mathcal{R}}(u)\big]=[R\circ u\circ \rho]
$$
is well-defined for $j=0,1$.
\end{lemma}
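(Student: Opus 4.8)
The plan is to obtain $\mathcal{R}_j$ by descending the involution $\widehat{\mathcal{R}}$ of \eqref{eq: involutiononmoduli} through the quotient $\widehat{\mathcal{M}}(L,J)\to\mathcal{M}_j(L,J)$, so the real content is to check that $\widehat{\mathcal{R}}$ is equivariant for the reparametrization action of $\Aut(D^2)$ (resp.\ $\Aut(D^2,1)$), up to replacing an automorphism by a conjugate that still belongs to the same group. First I would recall why $\widehat{\mathcal{R}}$ does map $\widehat{\mathcal{M}}(L,J)$ to itself: for $u\in\widehat{\mathcal{M}}(L,J)$ the map $R\circ u\circ\rho$ sends $\partial D^2$ into $L$ because $\rho(\partial D^2)=\partial D^2$ and $R$ restricts to the identity on $L=\Fix(R)$; it is $J$-holomorphic because $d\rho\circ i=-i\circ d\rho$ together with the anti-$R$-invariance $R_*\circ J=-J\circ R_*$ turns the equation $du\circ i=J\circ du$ into $d(R\circ u\circ\rho)\circ i=J\circ d(R\circ u\circ\rho)$; and it has Maslov index $2$ because complex conjugation $\rho$ and the antisymplectic involution $R$ each reverse the sign of the Maslov index and of the symplectic area, so the two sign changes cancel. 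Since $R$ and $\rho$ are involutions, $\widehat{\mathcal{R}}^2(u)=R^2\circ u\circ\rho^2=u$, so $\widehat{\mathcal{R}}$ is indeed an involution on $\widehat{\mathcal{M}}(L,J)$.

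Next comes the key computation. For $\phi\in\Aut(D^2)$ and $u\in\widehat{\mathcal{M}}(L,J)$, using $\rho\circ\rho=\id$ one has
$$
\widehat{\mathcal{R}}(u\circ\phi)=R\circ u\circ\phi\circ\rho=(R\circ u\circ\rho)\circ(\rho\circ\phi\circ\rho)=\widehat{\mathcal{R}}(u)\circ(\rho\circ\phi\circ\rho).
$$
Now $\rho\circ\phi\circ\rho$ is a bijection of $D^2$, smooth up to the boundary, and it is holomorphic since $\rho$ is antiholomorphic and $\phi$ is holomorphic; hence $\rho\circ\phi\circ\rho\in\Aut(D^2)$. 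Therefore $\widehat{\mathcal{R}}(u\circ\phi)$ and $\widehat{\mathcal{R}}(u)$ lie in the same $\Aut(D^2)$-orbit, which shows that $\mathcal{R}_0[u]:=[\widehat{\mathcal{R}}(u)]$ does not depend on the representative, so $\mathcal{R}_0$ is well-defined on $\mathcal{M}_0(L,J)$.

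For $j=1$ I would run the same computation with $\phi\in\Aut(D^2,1)$; it then remains only to observe that the conjugate $\rho\circ\phi\circ\rho$ still fixes $1\in\partial D^2$, which is immediate because $\rho(1)=\bar 1=1$, so $(\rho\circ\phi\circ\rho)(1)=\rho(\phi(1))=\rho(1)=1$. Hence $\rho\circ\phi\circ\rho\in\Aut(D^2,1)$ and the same argument gives that $\mathcal{R}_1$ is well-defined on $\mathcal{M}_1(L,J)$; moreover $\mathcal{R}_j^2=\id$ for $j=0,1$ since $\widehat{\mathcal{R}}$ is an involution and the quotient maps are equivariant in the sense above. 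I do not expect a genuine obstacle here — it is a bookkeeping argument — but the one point that deserves care, and the only place a sign could intervene, is the Maslov-index invariance in the first step (why $R\circ u\circ\rho$ has Maslov index $2$ rather than $-2$): this is exactly where both the anti-$R$-invariance of $J$ and the antiholomorphicity of $\rho$ are needed, and it explains why one precomposes with $\rho$ rather than working with $R\circ u$ alone. The descent to the marked-point quotient $\mathcal{M}_1$ is only marginally more delicate than the one to $\mathcal{M}_0$, the extra input being the trivial fact that $\rho$ fixes the marked point $1$.
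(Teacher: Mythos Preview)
Your proof is correct and follows essentially the same approach as the paper: both show that $\widehat{\mathcal{R}}(u\circ\sigma)=\widehat{\mathcal{R}}(u)\circ\tilde{\sigma}$ with $\tilde{\sigma}=\rho\circ\sigma\circ\rho\in\Aut(D^2,1)$. The paper verifies this by writing out the explicit M\"obius form $\sigma(z)=e^{i\theta}\frac{z-z_0}{1-\bar z_0 z}$ and computing $\tilde{\sigma}$ by hand, whereas your abstract argument (that conjugating a biholomorphism by the antiholomorphic involution $\rho$ yields a biholomorphism, and that $\rho(1)=1$) is cleaner and also supplies the verification---tacitly assumed in the paper---that $\widehat{\mathcal{R}}$ lands in $\widehat{\mathcal{M}}(L,J)$.
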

\begin{proof}
We prove this only for $\mathcal{R}_1$ as the case of $\mathcal{R}_0$ follows easily.
	Recall that every $\sigma\in \Aut(D^2,1)$ is of the form
	$$
	\sigma(z)=e^{i\theta}\frac{z-z_0}{1-\bar{z}_0z},
	$$
for some $\theta\in \R$ and $|z_0|<1$ satisfying  $e^{i\theta}\frac{1-z_0}{1-\bar{z}_0}=1$, see \cite[Exercise~4.2.5]{McSalJholo}. The map $\tilde{\sigma}(z)=e^{-i\theta}\frac{z-\bar{z}_0}{1-z_0z}\in \Aut(D^2,1)$ satisfies $\sigma\circ \rho=\rho\circ \tilde{\sigma}$. It follows that
$$
\mathcal{R}_1[u\circ \sigma]=[R\circ u\circ \sigma \circ \rho] = [R\circ u\circ \rho \circ \tilde{\sigma}]=\mathcal{R}_1[u].
$$
This shows that $\mathcal{R}_1$ is well-defined.
\end{proof}
\subsection{No fixed point of the involutions $\mathcal{R}_0$ and $\mathcal{R}_1$}\label{sec: nofixedpointonmoduli}
We start with observing that for $u\in \widehat{\mathcal{M}}(L,J)$,
$$
\mathcal{R}_0[u]=[u] \iff R\circ u\circ \rho\circ \sigma =u \quad \text{for some $\sigma\in \Aut(D^2)$}.
$$
This section is devoted to prove the following.
\begin{theorem}\label{thm: nofixedthm}
Let $J\in \mathcal{J}_R$. For any $u\in\widehat{\mathcal{M}}(L,J)$ there exists no  $\sigma\in \Aut(D^2)$ such that
$$
R\circ u \circ \rho \circ \sigma = u.
$$
In particular, the fixed point set of $\mathcal{R}_j$ is empty for $j=0,1$.
\end{theorem}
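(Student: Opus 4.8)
The plan is to argue by contradiction and to \emph{fold a symmetric disc along its real locus}: the fold turns out to be a $J$-holomorphic disc of Maslov index~$1$, which is impossible since $N_L\ge 2$. So I would assume there exist $u\in\widehat{\mathcal{M}}(L,J)$ and $\sigma\in\Aut(D^2)$ with $R\circ u\circ\rho\circ\sigma=u$, and set $\phi:=\rho\circ\sigma$, an anti-holomorphic automorphism of $D^2$. Since $R$ is an involution this relation is equivalent to $u\circ\phi=R\circ u$, and iterating yields $u\circ\phi^{2}=u$. Here the discussion in Section~\ref{sec: disccount} is used: a $J$-holomorphic disc with $\mu=N_L$ is simple, so our $u$ (with $\mu(u)=2=N_L$) is simple, whence any $\psi\in\Aut(D^2)$ with $u\circ\psi=u$ equals the identity --- evaluate on the dense set $\Inj(u)$. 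Thus $\phi^{2}=\id$, i.e.\ $\phi$ is an anti-holomorphic \emph{involution} of $D^2$. I would then invoke the classical fact that every anti-holomorphic involution of $D^2$ is conjugate, by some $\psi\in\Aut(D^2)$, to complex conjugation $\rho$ (by Brouwer it has a fixed point in the interior; normalizing that point to $0$ writes $\phi$ as $z\mapsto e^{i\theta}\bar z$, which is manifestly conjugate to $\rho$). Replacing $u$ by $u\circ\psi\in\widehat{\mathcal{M}}(L,J)$, I may therefore assume $\phi=\rho$, that is, $u\circ\rho=R\circ u$.

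After this normalization the rest is bookkeeping. On the real diameter $[-1,1]=\Fix(\rho)\cap D^2$ the identity reads $u=R\circ u$, so $u([-1,1])\subset\Fix(R)=L$; together with $u(\partial D^2)\subset L$, the restriction $v:=u|_{D^{+}}$ to the closed upper half-disc --- which is conformally a disc --- is a $J$-holomorphic disc with boundary on $L$, hence represents a class in $\pi_2(M,L)$. From $u\circ\rho=R\circ u$ one reads off $u|_{D^{-}}=R\circ v\circ\rho$ on the lower half-disc, and since $R$ is antisymplectic while $\rho\colon D^{-}\to D^{+}$ is orientation-reversing the two sign changes cancel, giving $\ow(u|_{D^{-}})=\ow(v)$ and hence $\ow(v)=\tfrac12\,\ow(u)$. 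Applying monotonicity of $L$, with its single constant $K$, to $u$ and to $v$ gives $K\,\ow(u)=\mu(u)=2$ and
\[
\mu(v)=K\,\ow(v)=\tfrac12\,K\,\ow(u)=1,
\]
contradicting $\mu(\pi_2(M,L))=N_L\Z$ with $N_L\ge2$. This proves that no such $\sigma$ exists; the final assertion follows immediately, since $\mathcal{R}_0[u]=[u]$ is by definition the existence of such a $\sigma$, and any fixed point of $\mathcal{R}_1$ gives one of $\mathcal{R}_0$.

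The main obstacle, I expect, lies in the two preparatory reductions rather than in the folding itself: deducing $\phi^{2}=\id$ is exactly where one must use that Maslov index~$2$ discs are simple, and the normalization $\phi=\rho$ relies on the classification of anti-holomorphic involutions of the disc. I would also be careful to note that no regularity of $v$ at the corners $\pm1$ is needed --- only that $v$ is continuous with boundary on $L$, which already makes $[v]$ a well-defined class in $\pi_2(M,L)$ to which monotonicity applies --- so the argument avoids delicate boundary-regularity issues altogether.
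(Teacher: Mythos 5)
Your argument is correct, and its skeleton is the same as the paper's: use simplicity of Maslov index~$2$ discs to show that $\tilde\rho=\rho\circ\sigma$ is an antiholomorphic involution, split the disc along $\Fix(\tilde\rho)$ into two halves of equal symplectic area (the sign from $R^*\ow=-\ow$ cancelling against the orientation reversal of $\tilde\rho$), and contradict monotonicity. The implementation differs in two worthwhile ways. First, you conjugate $\tilde\rho$ to $\rho$ inside $\Aut(D^2)$ and replace $u$ by $u\circ\psi$, so the two halves become the literal upper and lower half-discs and the restriction $v=u|_{D^+}$ is already smooth up to the corners; this bypasses the paper's Lemma~\ref{lem: 2discsdecomp}, which instead reparametrizes the two regions by biholomorphisms and invokes removal of boundary singularities to produce two honest $J$-holomorphic discs $u_1,u_2$. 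Second, your contradiction is numerically different: you apply monotonicity to the single class $[v]$ with $\ow([v])=\tfrac12\ow([u])$ and get the non-integral-in-$N_L\Z$ value $\mu([v])=1$, whereas the paper uses $[u]=[u_1]+[u_2]$ with both pieces of positive energy, hence $\mu[u_j]\ge 2$ and $\mu[u]\ge 4$. Your route is slightly more elementary (no conformal parametrization of the regions, no boundary regularity, no decomposition of $[u]$ is needed -- only that a continuous map of pairs defines a class on which $\mu$ and $\ow$ are the homomorphisms of monotonicity), at the cost of the normalization step. Two small points of care: the claim that every antiholomorphic involution of $D^2$ is conjugate to $\rho$ is exactly what the paper asserts in Lemma~\ref{lem: rhoisinvolution} (citing Ker\'ekj\'art\'o), but your parenthetical justification via Brouwer only yields a fixed point in the \emph{closed} disc, possibly on the boundary; either cite the classification, or argue that an orientation-reversing isometric involution of the hyperbolic disc is a reflection in a geodesic, whose axis meets the interior. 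Also, your phrase ``$\mu(u)=2=N_L$'' tacitly assumes $N_L=2$, which is harmless since for $N_L\ge 3$ the space $\widehat{\mathcal{M}}(L,J)$ is empty and the statement is vacuous, but it is worth saying.
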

\begin{remark}
While the fact that $\Fix(\mathcal{R}_1)=\emptyset$ will be used to show that the count of discs is even, the fact that $\Fix(\mathcal{R}_0)=\emptyset$ is needed to prove equivariant transversality.
\end{remark}
The idea of the proof is the following. If $u$ satisfies $R\circ u \circ \rho \circ \sigma=u$ for some $\sigma \in \Aut(D^2)$, then we can decompose $[u]$ into two $J$-holomorphic discs $[u_1]$ and $[u_2]$ in $\pi_2(M,L)$ with $\mu[u_j]\ge 2$. This is impossible since then $\mu[u]=\mu[u_1]+\mu[u_2]\ge 4$.
We first need the following lemma. 
\begin{lemma}\label{lem: rhoisinvolution}
Let $u\colon (D^2,\p D^2)\to (M,L)$ be a simple $J$-holomorphic disc. Suppose that there exists $\sigma\in \Aut(D^2)$ such that
$$
R\circ u \circ \rho \circ \sigma = u.
$$
Then $\tilde{\rho}:=\rho\circ \sigma$ is an involution on $D^2$ and $\Fix(\tilde{\rho})$ is a simple smooth arc with ends in $\p D^2$. 
\end{lemma}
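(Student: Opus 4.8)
The plan is to first establish that $\tilde\rho$ really is an involution, and then to identify it, up to a biholomorphism of the disc, with the standard complex conjugation $\rho$; the description of $\Fix(\tilde\rho)$ is then immediate from that of $\Fix(\rho)=[-1,1]$.

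First I would observe that $\tilde\rho=\rho\circ\sigma$ is an anti-holomorphic diffeomorphism of $D^2$, smooth up to $\p D^2$, since $\rho$ is anti-holomorphic, $\sigma\in\Aut(D^2)$ is holomorphic, and both are diffeomorphisms of the closed disc. To see that $\tilde\rho^2=\id$, I would rewrite the hypothesis: composing $R\circ u\circ\tilde\rho=u$ on the left with $R$ and using $R^2=\id$ gives $u\circ\tilde\rho=R\circ u$, and hence $u\circ\tilde\rho^2=R\circ u\circ\tilde\rho=u$. Now $\tilde\rho^2$ is a holomorphic automorphism of $D^2$, and the identity $u=u\circ\tilde\rho^2$ forces $\tilde\rho^2(z)=z$ for every $z\in\Inj(u)$: indeed $u(\tilde\rho^2(z))=u(z)$ and $u^{-1}(u(z))=\{z\}$. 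Since $u$ is simple, $\Inj(u)$ is dense in $D^2$, so $\tilde\rho^2=\id$ by continuity. This is the only step where simplicity of $u$ enters, and I regard it as the crux of the argument; everything after is the standard structure theory of anti-holomorphic involutions of the disc, and the part requiring the most care is just making that rigorous.

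Next I would describe $\Fix(\tilde\rho)$. Being an orientation-reversing diffeomorphism of $D^2$ that preserves the interior, $\tilde\rho$ restricts to an orientation-reversing involution of $\p D^2\cong S^1$, which has exactly two fixed points $p_1\ne p_2$; in particular $\Fix(\tilde\rho)$ is nonempty and meets the boundary. I would then choose $\phi\in\Aut(D^2)$ with $\phi(\{p_1,p_2\})=\{-1,1\}$, so that $\tau:=\phi\circ\tilde\rho\circ\phi^{-1}$ is an anti-holomorphic involution of $D^2$ fixing $\pm1$. Writing $\tau=g\circ\rho$ with $g=\tau\circ\rho\in\Aut(D^2)$, the conditions $\tau(\pm1)=\pm1$ become $g(\pm1)=\pm1$, which forces $g$ to be a Möbius transformation with real coefficients; then $\bar g(z):=\overline{g(\bar z)}=g(z)$, and $\tau^2=\id$ is equivalent to $g\circ\bar g=g^2=\id$, whence $g=\id$. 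Therefore $\tau=\rho$, so $\tilde\rho=\phi^{-1}\circ\rho\circ\phi$ and $\Fix(\tilde\rho)=\phi^{-1}(\Fix(\rho))=\phi^{-1}([-1,1])$. Since $\phi^{-1}$ is a diffeomorphism of the closed disc, this is a simple smooth arc with endpoints $\phi^{-1}(\pm1)\in\p D^2$, as claimed. (Alternatively, once $\tilde\rho$ is known to be an anti-holomorphic involution fixing $p_1,p_2\in\p D^2$, one can argue more geometrically: $\tilde\rho$ is a hyperbolic isometry fixing the two endpoints of the geodesic $\gamma$ joining $p_1$ and $p_2$, hence fixes $\gamma$ pointwise, and being the fixed set of an isometric involution, $\Fix(\tilde\rho)$ is totally geodesic, so it must equal $\gamma$.)
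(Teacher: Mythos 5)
Your proof is correct and follows essentially the same route as the paper: the involution property $\tilde{\rho}^2=\id$ is deduced from $u\circ\tilde{\rho}^2=u$ together with the density of $\Inj(u)$ (simplicity) and continuity, exactly as in the paper's argument. The only difference is that the paper simply asserts that an antiholomorphic involution of $D^2$ is conjugate to complex conjugation (citing Ker\'ekj\'art\'o for a topological analogue), whereas you supply a short self-contained proof of this classification by normalizing the two boundary fixed points and showing the Möbius factor is the identity; this is a fine, slightly more detailed substitute for the cited fact.
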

\begin{proof}
Suppose that there exists $\sigma\in \Aut(D^2)$ such that
\begin{equation}\label{eq: fixpointrelation}
R\circ u \circ \rho \circ \sigma = u.	
\end{equation}
The diffeomorphism $\tilde{\rho}=\rho\circ \sigma\colon D^2\to D^2$ is antiholomorphic. Applying \eqref{eq: fixpointrelation} twice we obtain
\begin{equation}\label{eq: tilderho}
u\circ \tilde{\rho}^2=u.	
\end{equation}
For a given $z\in D^2$ choose a sequence of $z_\nu\in \Inj(u)$ converging to $z$. From \eqref{eq: tilderho} we have $\tilde{\rho}^2(z_\nu)=z_\nu$ for all $\nu\in \N$, and hence $\tilde{\rho}^2(z)=z$. This proves that $\tilde{\rho}$ is an involution on $D^2$. Since $\tilde{\rho}$ is an antiholomorphic involution of $D^2$, it is conjugated to complex conjugation $z\mapsto \bar{z}$ and hence $\Fix(\tilde{\rho})$ is a simple smooth arc with ends in $\p D^2$. See also \cite[Theorem~3.3]{Kerek} for a topological result.
\end{proof}
The \emph{energy} of a smooth disc $u\colon (D^2,\p D^2)\to (M,L)$ is defined by
$$
E(u)=\frac{1}{2}\int_{D^2}|du|^2\dvol,
$$
where $\dvol$ denotes the standard volume form on $D^2$ and the norm $|\cdot |$ is induced by the metric $g_J=\ow(\cdot,J\cdot)$.
If $u$ is $J$-holomorphic, then
$$
E(u)=\ow(u):=\int_{D^2}u^*\ow.
$$
We observe that $E(u)>0$ if and only if there exists  $z\in D^2$ such that $u(z)\in M\setminus L$. 
The next step is to show the following.
\begin{lemma}\label{lem: 2discsdecomp}
Let $u\colon (D^2,\p D^2)\to (M,L)$ be a simple $J$-holomorphic disc with positive energy $E(u)>0$. Suppose that there exists $\sigma\in \Aut(D^2)$ such that
\begin{equation}\label{eq: fixedpoint}
R\circ u \circ \rho \circ \sigma = u.	
\end{equation}
Then there exist $J$-holomorphic discs $u_j\colon (D^2,\p D^2)\to (M,L)$ with $E(u_j)>0$ for $j=1,2$, such that $[u]=[u_1]+[u_2]\in \pi_2(M,L)$ and $E(u_1)=E(u_2)$.
\end{lemma}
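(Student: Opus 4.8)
The plan is to cut $D^2$ along the fixed arc of the anti-holomorphic involution supplied by Lemma~\ref{lem: rhoisinvolution}, and to check that the two resulting half-discs, after conformal reparametrization, are the sought-for $u_1$ and $u_2$. Set $\tilde\rho=\rho\circ\sigma$. By Lemma~\ref{lem: rhoisinvolution} this is an anti-holomorphic involution of $D^2$ and $\gamma:=\Fix(\tilde\rho)$ is a simple smooth arc with both endpoints on $\p D^2$, so $\gamma$ separates $\overline{D^2}$ into two closed half-discs $\overline{D_+}$ and $\overline{D_-}=\tilde\rho(\overline{D_+})$, meeting along $\gamma$, with $\tilde\rho|_\gamma=\id$. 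Since (by the proof of Lemma~\ref{lem: rhoisinvolution}) $\tilde\rho$ is conjugate to the standard conjugation $\rho$ by an element of the connected group $\Aut(D^2)$, I would first replace $u$ by $u\circ\phi$ for a suitable $\phi\in\Aut(D^2)$ — which alters neither $E(u)$ nor $[u]\in\pi_2(M,L)$ — and thereby assume $\tilde\rho=\rho$, $\gamma=[-1,1]$, and $R\circ u\circ\rho=u$; in particular $\gamma$ meets $\p D^2$ orthogonally, so each $\overline{D_\pm}$ is a Jordan domain with exactly two corners, each of interior angle $\pi/2$.

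The key observation is that $u$ maps the \emph{whole} boundary of each half-disc into $L$: one has $u(\p D^2)\subseteq L$ by hypothesis, while for $z\in\gamma$ the relation $R\circ u\circ\rho=u$ together with $\rho(z)=z$ forces $R(u(z))=u(z)$, i.e.\ $u(z)\in\Fix(R)=L$. I would then pick biholomorphisms $\psi_\pm\colon D^2\to D_\pm$, which extend by Carath\'eodory to homeomorphisms $\overline{D^2}\to\overline{D_\pm}$, and set $u_1:=u\circ\psi_+$, $u_2:=u\circ\psi_-$; each is a finite-energy $J$-holomorphic map $(D^2,\p D^2)\to(M,L)$. Since $D_+$ and $D_-$ partition $D^2$, $E(u_1)+E(u_2)=\int_{D_+}u^*\ow+\int_{D_-}u^*\ow=E(u)>0$; and since $\rho\colon\overline{D_+}\to\overline{D_-}$ is orientation-reversing with $u\circ\rho=R\circ u$, the identity $R^*\ow=-\ow$ gives $\int_{D_-}u^*\ow=\int_{D_+}u^*\ow$, whence $E(u_1)=E(u_2)=\tfrac12E(u)>0$. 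Finally, $[u]=[u_1]+[u_2]$ in $\pi_2(M,L)$ is the standard fact that a map of pairs cut along an arc whose image lies in $L$ decomposes as the sum of its two pieces; at the very least the identity $\ow(u)=\ow(u_1)+\ow(u_2)$ is immediate from the change of variables above, and, via monotonicity, it upgrades to $\mu(u)=\mu(u_1)+\mu(u_2)$, which is what Section~\ref{sec: proofofmainthm} ultimately uses.

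The one step I expect to be genuinely delicate is the boundary regularity of $u_1$ and $u_2$ at the corners. Since each $\overline{D_\pm}$ has corners of angle $\pi/2\ne\pi$, no biholomorphism $D^2\to D_\pm$ is a diffeomorphism up to the boundary there, so $u\circ\psi_\pm$ a priori has, at each of the two preimages of the corners, an isolated boundary singularity (typically of order $z^{1/2}$). The point is that these singularities are removable: $u\circ\psi_\pm$ is continuous on $\overline{D^2}$, of finite energy, $J$-holomorphic on the interior, and smooth up to the boundary away from two points, so the removable singularity theorem for finite-energy $J$-holomorphic maps with Lagrangian boundary conditions applies and turns $u_1,u_2$ into bona fide $J$-holomorphic discs. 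If one prefers to avoid this, it is inessential for the application in Theorem~\ref{thm: nofixedthm}: there it suffices that $u_1,u_2$ be non-constant $J$-holomorphic discs with boundary on $L$ satisfying $\ow(u)=\ow(u_1)+\ow(u_2)$, since monotonicity and $N_L\ge2$ then give $\mu(u)=\mu(u_1)+\mu(u_2)\ge4$, incompatible with $\mu(u)=2$.
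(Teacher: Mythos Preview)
Your proposal is correct and follows essentially the same route as the paper's own proof: cut $D^2$ along $\Fix(\tilde\rho)$, uniformize each half by a Riemann map that is only continuous (not smooth) up to the boundary, invoke the removable boundary singularity theorem for finite-energy $J$-holomorphic discs to obtain genuine smooth discs $u_1,u_2$, and compute $E(u_1)=E(u_2)$ via the change of variables $z\mapsto\tilde\rho(z)$ together with $R^*\ow=-\ow$. The only cosmetic differences are that you first conjugate $\tilde\rho$ to the standard $\rho$ (the paper works with the general $\tilde\rho$ and takes $\phi_2=\tilde\rho\circ\phi_1\circ\tilde\rho$), and that you are more explicit about the two corner points where regularity must be restored, whereas the paper absorbs this into a single citation of the boundary removable singularity theorem.
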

\begin{proof}
We again write $\tilde{\rho}=\rho\circ \sigma$. By Lemma \ref{lem: rhoisinvolution}, the map $\tilde{\rho}$ is an involution and $\Fix(\tilde{\rho})$ divides $D^2$ into two closed discs $D_1$ and $D_2$ with the following properties:
\begin{enumerate}
	\item[(i)] $D^2=D_1\cup D_2$,
	\item[(ii)] $\Fix(\tilde{\rho})=D_1\cap D_2$,
	\item[(iii)] $\tilde{\rho}(D_1)=D_2$.
\end{enumerate}
See the right drawing in Figure \ref{fig: decomposition}. The third property follows from the fact that $\tilde{\rho}$ is orientation-reversing. We shall construct two $J$-holomorphic discs $u_j\colon (D^2,\p D^2)\to (M,L)$ for $j=1,2$ with $E(u_j)>0$, which are extracted  from the regions $D_1$ and $D_2$.

Choose a biholomorphism $\phi_1$ from $\Int(D^2)$ to $\Int(D_1)$ that extends to a continuous map from $D^2$ to $D_1$, see for example \cite[Proposition~5.3]{LazGAFA}. By~\eqref{eq: fixedpoint}, we observe that $u(z)\in L$ for all $z\in \Fix(\tilde{\rho})$. Consider the continuous map
$$
u_1:=u\circ \phi_1\colon (D^2,\p D^2) \to (M,L),
$$
which is smooth and $J$-holomorphic on $\Int(D^2)$. Since $u$ has a finite energy, so does $u_1$. By removable singularities on the boundary \cite[Theorem~B.1]{LazGAFA}, $u_1$ extends to a smooth map on $D^2$ (still denoted by $u_1$) and hence is a $J$-holomorphic disc with boundary on $L$. Similarly, using the biholomorphism $\phi_2:=\tilde{\rho} \circ \phi_1\circ \tilde{\rho}$ from $\Int(D^2)$ to $\Int(D_2)$, we obtain the $J$-holomorphic disc corresponding to $D_2$,
$$
u_2\colon (D^2,\p D^2)\to (M,L)
$$
such that $u_2=u\circ \phi_2$ on $\Int(D^2)$. See Figure \ref{fig: decomposition}. By construction, we obtain $[u]=[u_1]+[u_2]\in \pi_2(M,L)$ and $E(u)=E(u_1)+E(u_2)$.
It remains to show that $E(u_1)=E(u_2)$. Since $\displaystyle E(u_j)=\int_{D^2}u_j^*\ow<\infty$ and $\displaystyle \int_{D^2}u_j^*\ow=\int_{\Int(D^2)}u_j^*\ow$ for $j=1,2$, it suffices to show that
$$
\int_{\Int(D^2)}u_1^*\ow = \int_{\Int(D^2)}u_2^*\ow.
$$
To see this, using \eqref{eq: fixedpoint} we observe that on $\Int(D^2)$,
$$
R\circ u_1 \circ \tilde{\rho} = R\circ u\circ \phi_1\circ \tilde{\rho} = u\circ \tilde{\rho} \circ \phi_1\circ \tilde{\rho} = u_2.
$$
Therefore, we verify that
\begin{align*}
 \int_{\Int(D^2)} u_2^* \ow &=\int_{\Int(D^2)} (R\circ u_1\circ \tilde{\rho})^*\ow \\
 &= \int_{\Int(D^2)} \tilde{\rho}^*u_1^*R^*\ow \\
 &= - \int_{\Int(D^2)} \tilde{\rho}^*u_1^*\ow \\
 &=  \int_{\Int(D^2)}u_1^*\ow. 
\end{align*}
The last equality follows from the fact that $\tilde{\rho}$ is orientation-reversing. This proves that $E(u_1)=E(u_2)$ and completes the proof.
\end{proof}
\begin{figure}[h]
\begin{center}
\begin{tikzpicture}[scale=0.5]

\begin{scope}[yshift=2.2cm]
\draw [fill=pink](-5,0) circle [radius=2];
\node at (-5,0) [above]{$D^2$};
\end{scope}

\begin{scope}[yshift=-2.2cm]
\draw [fill=blue!20](-5,0) circle [radius=2];
\node at (-5,0) [above]{$D^2$};
\end{scope}
\begin{scope}[yshift=0.8cm]	
\draw [->] (-1,1)--(1,0.5);
\end{scope}
\begin{scope}[yshift=-0.8cm]
\draw [->] (-1,-1)--(1,-0.5);	
\end{scope}


\begin{scope}[scale=1.4, xshift=-1cm]
\draw [fill=blue!20](5,0) circle [radius=2];

\draw[fill=pink, thick] ([shift=(20:2cm)]5,0) arc (20:190:2cm) -- plot [smooth] coordinates {(3.05,-0.4) (3.5,-0.9)  (4,-1.1) (4.2, -1) (4.5, -0.3) (5,0.3) (5.5, 1.3) (5.8, 1.3) (6, 0.4) (6.5, 0.1) (6.8,0.5)} -- cycle;

\draw [very thick, white](5,0) circle [radius=2];
\draw (5,0) circle [radius=1.97];

\draw [->] (6.5,2)--(6,1.1);
\node at (1,1.5) [above]{$\phi_1$};
\node at (1,-1.5) [below]{$\phi_2$};



\node at (4,0.2) [above]{$D_1$};
\node at (5.8,0) [below]{$D_2$};
\node at (6.5,2) [right]{$\Fix(\tilde{\rho})$};
	
\end{scope}

\end{tikzpicture}		
\end{center}
\caption{The description of the maps $\phi_j\colon D^2\to D_j$.}
\label{fig: decomposition}
\end{figure}
\begin{remark}
	In Lemmata \ref{lem: rhoisinvolution} and \ref{lem: 2discsdecomp}, we do not require that $L$ is monotone.
\end{remark}
We are ready to prove Theorem \ref{thm: nofixedthm}.
\begin{proof}[Proof of Theorem \ref{thm: nofixedthm}]
Suppose to the contrary that $R\circ u\circ \rho\circ \sigma=u$ for some $\sigma\in \Aut(D^2)$. Since $\mu(u)=2$, the disc $u$ is simple. By Lemma \ref{lem: 2discsdecomp}, there exists $J$-holomorphic discs $u_1$ and $u_2$ with $E(u_j)>0$ for $j=1,2$, such that $[u]=[u_1]+[u_2]\in \pi_2(M,L)$ and $E(u_1)=E(u_2)$. By the monotonicity of $L$, we have $\mu[u_1]=\mu[u_2]\ge 2$. Hence, we obtain
$$
\mu[u]=\mu[u_1]+\mu[u_2]\ge 4,
$$
which contradicts to the assumption that $\mu[u]=2$.
\end{proof}

\subsection{Equivariant transversality}\label{sec: eqtrans}
Since the involution $\mathcal{R}_0$ has no fixed point by Theorem \ref{thm: nofixedthm}, we achieve transversality by a local perturbation of an almost complex structure $J$ in~$\mathcal{J}_R$.

We explain relevant notations that will be used in this section. Fix $p>2$. For $J\in \mathcal{J}$ and $u\colon (D^2,\p D^2)\to (M,L)$ with $\bar{\p}_J(u)=0$, let
$$
{\bf D}_u\colon W^{1,p}(u^*TM,{u|_{\p D^2}^*}TL)\to L^p(u^*TM)
$$
be the Cauchy--Riemann type operator given by the linearization of the holomorphic curve equation $\bar{\p}_J(u)=0$, where
$W^{1,p}(u^*TM,{u|_{\p D^2}^*}TL)$ denotes the Banach space of $W^{1,p}$-sections of $u^*TM$ with Lagrangian boundary condition $u|_{\p D^2}^*TL$ and $L^p(u^*TM)$ denotes the Banach space of $L^p$-sections of $u^*TM$. By the Riemann--Roch theorem \cite[Theorem~C.1.10]{MS}, the operator ${\bf D}_u$ is Fredholm and its index is given by
$$
\ind({\bf D}_u)=n + \mu(u),
$$
where $\mu(u)$ is the Maslov index of the disc $u$. See \cite[Theorem~C.3.6]{McSalJholo} for the index computation. The \emph{virtual dimension} of the moduli space $\mathcal{M}_1(L,J)$ at~$[u]$ is given by
$$
\virdim_{[u]} \mathcal{M}_1(L,J)=n+\mu[u]-2=n.
$$
We say that $J\in \mathcal{J}$ is \emph{regular} if  the operator ${\bf D}_u$ is surjective for all $u\in \widehat{\mathcal{M}}(L,J)$. If $J$ is regular, then by the implicit function theorem $\mathcal{M}_1(L,J)$ is a smooth manifold and the (local) virtual dimension of $\mathcal{M}_1(L,J)$ is the dimension of $\mathcal{M}_1(L,J)$.
In the sprit of equivariantly regular points introduced in \cite[Section~4]{FHS} and \cite[Section~5c]{SeidelQuiv}, we denote by
\begin{align*}
\Inj_R(u) &= \big\{z\in \Inj(u) \mid u(z)\notin R(u(D^2)) \big\}	 \\
&= \big\{ z\in D^2 \mid du(z)\ne 0,\ u^{-1}(u(z))=\{z\},\ u(z)\notin R(u(D^2))\big\}
\end{align*}
the \emph{set of $R$-injective points} of $u$.  

Fix $J\in \mathcal{J}_R$. Recall that $\widehat{\mathcal{R}}(u)=R\circ u\circ \rho$ is defined in \eqref{eq: involutiononmoduli}. We begin with the following basic fact. 
\begin{lemma}\label{lem: simpleRsimple}
A $J$-holomorphic disc $u$ is simple if and only if $\widehat{\mathcal{R}}(u)$ is simple.
\end{lemma}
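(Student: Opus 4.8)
The plan is to show that simplicity is preserved under the transformation $u \mapsto \widehat{\mathcal{R}}(u) = R\circ u\circ\rho$, exploiting that $R$ and $\rho$ are both homeomorphisms (in fact $R$ is a diffeomorphism of $M$ and $\rho$ is a diffeomorphism of $D^2$). Since $\widehat{\mathcal{R}}$ is an involution on $\widehat{\mathcal{M}}(L,J)$, it suffices to prove one implication: if $u$ is simple then $\widehat{\mathcal{R}}(u)$ is simple; the converse follows by applying this to $\widehat{\mathcal{R}}(u)$ and using $\widehat{\mathcal{R}}^2 = \id$.

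First I would unwind the definition of $\Inj$. Set $v = \widehat{\mathcal{R}}(u) = R\circ u\circ\rho$. I claim $\Inj(v) = \rho^{-1}(\Inj(u)) = \rho(\Inj(u))$, using $\rho^2 = \id$. The key point is a pointwise correspondence: for $z\in D^2$, we have $dv(z) \ne 0$ if and only if $du(\rho(z)) \ne 0$, because $dv(z) = dR\big(du(\rho(z))\big)\circ d\rho(z)$ and both $dR$ and $d\rho$ are isomorphisms on the relevant tangent spaces; and $v^{-1}(v(z)) = \{z\}$ if and only if $u^{-1}(u(\rho(z))) = \{\rho(z)\}$, because $v(z') = v(z) \iff R(u(\rho(z'))) = R(u(\rho(z))) \iff u(\rho(z')) = u(\rho(z))$ since $R$ is injective, and $\rho$ is a bijection. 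Combining the two equivalences gives $z\in\Inj(v) \iff \rho(z)\in\Inj(u)$, i.e. $\Inj(v) = \rho(\Inj(u))$.

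Then I would conclude: $\Inj(u)$ is dense in $D^2$ by the simplicity of $u$, and since $\rho$ is a homeomorphism of $D^2$, the image $\rho(\Inj(u)) = \Inj(v)$ is dense as well. Hence $v = \widehat{\mathcal{R}}(u)$ is simple. Applying the same reasoning to $\widehat{\mathcal{R}}(u)$ in place of $u$ and using $\widehat{\mathcal{R}}(\widehat{\mathcal{R}}(u)) = u$ yields the reverse implication, completing the proof.

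I do not expect any real obstacle here — the lemma is essentially bookkeeping about how injective points transform under a diffeomorphism on both the source and the target. The only mild care needed is to note that $R$ being a (smooth, bijective) involution guarantees $dR$ is an isomorphism everywhere, so no vanishing of $du$ is created or destroyed, and that $\rho$ being a diffeomorphism of $D^2$ preserves density; everything else is formal.
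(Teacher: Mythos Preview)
Your proof is correct and is precisely the unpacking of the paper's one-line argument, which simply says ``This follows from the fact that $R$ and $\rho$ are diffeomorphisms.'' You have just made explicit the bookkeeping that the paper leaves to the reader, namely that $\Inj(\widehat{\mathcal{R}}(u)) = \rho(\Inj(u))$.
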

\begin{proof}
This follows from the fact that $R$ and $\rho$ are diffeomorphisms.
\end{proof}

The following lemma is the crucial ingredient to prove equivariant transversality. This is the analogue of \cite[Lemma~5.12]{SeidelQuiv}.
\begin{lemma}\label{lem: inj_Rdenseopen}
Let $u\in\widehat{\mathcal{M}}(L,J)$. Then $\Inj_R(u)$ is open and dense in $D^2$.
\end{lemma}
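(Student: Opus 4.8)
The plan is to establish openness and density separately, reducing the statement to the already-known facts about ordinary injective points together with Theorem~\ref{thm: nofixedthm}. First I would recall that $\Inj(u)$ is open and dense in $D^2$: density holds because $u$ is simple (every Maslov index $2$ disc is simple, as noted in Section~\ref{sec: disccount}), and openness is the cited result \cite[Proposition~2.2]{ZehAnnulus}. So it suffices to understand the extra condition $u(z)\notin R(u(D^2))$ cutting $\Inj_R(u)$ out of $\Inj(u)$.

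For \textbf{openness}: write $u'=\widehat{\mathcal R}(u)=R\circ u\circ\rho$, so that $u'(D^2)=R(u(D^2))$ and, by Lemma~\ref{lem: simpleRsimple}, $u'$ is again a simple $J$-holomorphic disc. I want to show that the set $\{z\in\Inj(u)\mid u(z)\notin u'(D^2)\}$ is open. Given $z_0$ in this set, the point $u(z_0)$ lies in the compact set $u'(D^2)$'s complement, so there is a neighborhood $V$ of $u(z_0)$ in $M$ disjoint from $u'(D^2)$; by continuity of $u$, $u^{-1}(V)$ is an open neighborhood of $z_0$, and its intersection with the open set $\Inj(u)$ is an open neighborhood of $z_0$ contained in $\Inj_R(u)$. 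This is the easy half.

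For \textbf{density}: the key point is that $K:=u^{-1}\big(R(u(D^2))\big)=u^{-1}\big(u'(D^2)\big)$ has empty interior in $D^2$, for then $\Inj_R(u)=\Inj(u)\setminus K$ is the intersection of a dense set with the complement of a nowhere-dense set, still dense. Suppose for contradiction that $K$ contains an open set $U$. Since $\Inj(u)$ and $\Inj(u')$ are dense and open, and unique continuation/standard arguments for simple $J$-holomorphic discs (the properties collected in the appendix, used precisely for this lemma) show that the set of $z\in U$ with $u(z)\in u'(\Inj(u'))$ and $u(z)$ an injective value of $u$ is still dense in $U$, we can pick such a $z$; then $u(z)=u'(w)$ for a unique $w\in\Inj(u')$, and locally near $z$ the images $u(U)$ and $u'(\text{nbhd of }w)$ are embedded $J$-holomorphic half-discs (or discs, in the interior) through the same point. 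By positivity of intersections / the local structure of $J$-holomorphic curves, having $u(U)\subseteq u'(D^2)$ on an open set forces the two discs to coincide near that point, and then by unique continuation $u(D^2)=u'(D^2)=R(u(D^2))$ globally, i.e.\ $u$ and $\widehat{\mathcal R}(u)$ parametrize the same disc. Reparametrizing, this gives $R\circ u\circ\rho\circ\sigma=u$ for some $\sigma\in\Aut(D^2)$, contradicting Theorem~\ref{thm: nofixedthm}. Hence $K$ has empty interior and density follows.

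The \textbf{main obstacle} is the density argument, specifically making rigorous the step ``$u(U)\subseteq u'(D^2)$ on an open set implies the two simple discs have the same image.'' This is where one genuinely needs the local analysis of $J$-holomorphic curves — unique continuation, the fact that injective points are generic, and the local intersection theory of two $J$-holomorphic curves sharing infinitely many points — which is exactly the material deferred to the appendix. Once that local-to-global coincidence statement is in hand, the contradiction with Theorem~\ref{thm: nofixedthm} closes the argument cleanly, in direct parallel with \cite[Lemma~5.12]{SeidelQuiv}.
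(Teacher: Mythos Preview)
Your approach is essentially the same as the paper's: write $\Inj_R(u)=\Inj(u)\cap S\big(u,\widehat{\mathcal R}(u)\big)$, get openness for free, and reduce density to showing $u(D^2)\ne R(u(D^2))$ via Theorem~\ref{thm: nofixedthm} together with the appendix results (Propositions~\ref{prop: imagesamerep} and~\ref{prop: imagediffdiscrete}); the paper argues directly while you argue by contradiction, but the content is identical. One small point you glossed over: to go from $u(D^2)=\widehat{\mathcal R}(u)(D^2)$ to the existence of $\sigma\in\Aut(D^2)$ with $R\circ u\circ\rho\circ\sigma=u$, Proposition~\ref{prop: imagesamerep} also requires the boundary images to agree --- the paper supplies this by observing that $u(\partial D^2)\subset L=\Fix(R)$ forces $R(u(\partial D^2))=u(\partial D^2)$, which you should make explicit.
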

\begin{proof}
We first observe that 
$$
\Inj_R(u)=\Inj(u)\cap S\big(u,\widehat{\mathcal{R}}(u)\big),
$$
where $S\big(u,\widehat{\mathcal{R}}(u)\big)=\big\{z\in D^2\mid u(z)\notin R(u(D^2))\big\}$ is the open set defined in \eqref{eq: Su_1u_2set}. Hence, $\Inj_R(u)$ is open. It remains to prove that $\Inj_R(u)$ is dense in~$D^2$. To show this, it suffices to show that $S\big(u,\widehat{\mathcal{R}}(u)\big)$ is dense. Note that $\rho(D^2)=D^2$ and $\rho(\p D^2)=\p D^2$. By Theorem \ref{thm: nofixedthm}, we know $\mathcal{R}_0[u]\ne [u]$ in $\mathcal{M}_0(L,J)$, equivalently,
$$
R\circ u\circ \rho \circ \sigma \ne u\quad \text{for any $\sigma\in \Aut(D^2)$}.
$$
Since $u$ is simple, so is $\widehat{\mathcal{R}}(u)$ by Lemma \ref{lem: simpleRsimple}. By Proposition \ref{prop: imagesamerep}, we have $u(D^2)\ne R(u(D^2))$ or $u(\p D^2)\ne R(u(\p D^2))$. Since $u(\p D^2)\subset L=\Fix(R)$, we obtain
	$$
	R(u(\p D^2)) = u(\p D^2)
	$$
and therefore $u(D^2)\ne R(u(D^2))$. It now follows from Proposition \ref{prop: imagediffdiscrete} that $S\big(u,\widehat{\mathcal{R}}(u)\big)$ is dense.
\end{proof}
We now prove the equivariant transversality following arguments in \cite[Proposition~5.13]{SeidelQuiv}.
\begin{theorem}\label{thm: equitrans}
	There exists a Baire subset $\mathcal{J}_R^ {\reg}\subset \mathcal{J}_R$ which has the property that every $J\in \mathcal{J}_R^{\reg}$ is regular.
\end{theorem}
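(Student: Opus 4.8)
The plan is to run the standard Sard--Smale universal moduli space argument, but carried out entirely inside the anti-$R$-invariant class $\mathcal{J}_R$, using the $R$-injective points produced by Lemma~\ref{lem: inj_Rdenseopen} in place of the ordinary injective points. First I would set up the universal moduli space
$$
\widehat{\mathcal{M}}(L,\mathcal{J}_R^{\ell}) = \big\{(u,J) \mid J\in \mathcal{J}_R^{\ell},\ \bar{\p}_J(u)=0,\ \mu(u)=2\big\}
$$
for a suitable Banach manifold $\mathcal{J}_R^{\ell}$ of $C^{\ell}$ (or Floer's $C^{\epsilon}$) anti-$R$-invariant compatible almost complex structures, and show it is a Banach manifold by verifying that the full linearized operator $(\xi,Y)\mapsto {\bf D}_u\xi + \tfrac12 Y(u)\circ du\circ i$ is surjective at every solution. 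The key point is that the tangent space to $\mathcal{J}_R$ consists of those $Y$ which are anti-$R$-invariant in the appropriate sense ($Y$ anticommutes with $J$ and satisfies $R_*Y R_* = Y$), which is a genuinely smaller space than the full tangent space to $\mathcal{J}$; this is exactly where the naive transversality argument breaks down and where Lemma~\ref{lem: inj_Rdenseopen} is needed.

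The heart of the argument is the surjectivity of this universal linearized operator. By the usual functional-analytic reduction (the operator has closed range since ${\bf D}_u$ is Fredholm, so it suffices to show the cokernel is zero), one assumes $\eta\in L^q(u^*TM)$ annihilates the image: $\langle \eta, {\bf D}_u\xi\rangle = 0$ for all $\xi$ (so $\eta$ lies in the kernel of the formal adjoint and is in particular smooth away from a discrete set and satisfies a unique continuation property), and $\langle \eta, Y(u)\circ du\circ i\rangle = 0$ for all admissible anti-$R$-invariant variations $Y$. One then argues that if $\eta$ were not identically zero, it is nonzero on a dense open set, so we may pick a point $z_0$ with $\eta(z_0)\neq 0$ and $du(z_0)\neq 0$; the $R$-injectivity from Lemma~\ref{lem: inj_Rdenseopen} lets us choose $z_0\in \Inj_R(u)$, meaning $u(z_0)$ lies neither on any other part of $u(D^2)$ nor on $R(u(D^2))$. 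This separation is precisely what allows the construction of a \emph{bump}-type variation $Y$ supported near $u(z_0)$ that is simultaneously (a) anti-$R$-invariant — one symmetrizes a local bump by averaging against $R$, and because $u(z_0)\notin R(u(D^2))$ the symmetrization does not re-interact with the curve — and (b) pairs nontrivially with $\eta$ at $z_0$, contradicting $\langle \eta, Y(u)\circ du\circ i\rangle = 0$. Hence $\eta\equiv 0$, the universal operator is surjective, and $\widehat{\mathcal{M}}(L,\mathcal{J}_R^{\ell})$ is a Banach manifold.

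Once the universal moduli space is a Banach manifold, I would apply the Sard--Smale theorem to the projection $\widehat{\mathcal{M}}(L,\mathcal{J}_R^{\ell})\to \mathcal{J}_R^{\ell}$: its regular values form a Baire (comeagre) subset of $\mathcal{J}_R^{\ell}$, and for such $J$ the operator ${\bf D}_u$ is surjective for every $u\in \widehat{\mathcal{M}}(L,J)$, i.e.\ $J$ is regular. Finally I would promote this from the $C^{\ell}$ class to the $C^{\infty}$ class $\mathcal{J}_R$: by the standard Taubes-type argument (as in \cite[Section~3.2]{McSalJholo}), one writes $\mathcal{J}_R^{\reg}$ as a countable intersection over compact sets of energy/derivative bounds of open dense subsets, each of which is shown to be open dense in $\mathcal{J}_R$ using the $C^{\ell}$ result together with the fact that $\mathcal{J}_R$ is dense in $\mathcal{J}_R^{\ell}$ and an openness argument from Gromov compactness (no bubbling, since $N_L=2$ is minimal and $L$ is monotone). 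This yields the desired Baire subset $\mathcal{J}_R^{\reg}\subset \mathcal{J}_R$ of regular anti-$R$-invariant almost complex structures.

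The main obstacle I anticipate is step two, the construction of the anti-$R$-invariant perturbation $Y$ that still sees $\eta$: the $R$-symmetrization could in principle destroy the localization or cancel the pairing, and it is exactly the hypothesis $u(z_0)\notin R(u(D^2))$ from the definition of $\Inj_R(u)$ — itself resting on Theorem~\ref{thm: nofixedthm} via Lemma~\ref{lem: inj_Rdenseopen} — that rules this out. Everything else (Riemann--Roch Fredholmness, Sard--Smale, the $C^{\ell}$-to-$C^{\infty}$ promotion, compactness) is by now routine and follows the template of Khovanov--Seidel \cite[Section~5c]{SeidelQuiv} and \cite[Section~14c]{SeidelBook} closely; I would cite those for the parts that are formally identical.
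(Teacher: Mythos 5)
Your proposal is correct and follows the paper's own proof essentially verbatim: the same parametric transversality argument over $\mathcal{J}_R^{\ell}$, with the putative cokernel element killed at $R$-injective points (Lemma \ref{lem: inj_Rdenseopen}) by a locally supported perturbation made anti-$R$-invariant by averaging, the disjointness $u(z_0)\notin R(u(D^2))$ guaranteeing that the correction term $R^*Y$ vanishes along $u$, followed by Sard--Smale and Taubes' trick. The only slip is the sign in the tangent space condition: differentiating $R_*JR_*=-J$ gives $R^*Y=R_*YR_*=-Y$ (so one takes $\tilde{Y}=Y-R^*Y$), not $R_*YR_*=Y$, though this does not affect the argument since $R^*Y$ vanishes along the curve in any case.
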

\begin{proof}
Fix a positive integer $\ell$. We abbreviate by
$$
\mathcal{J}_R^\ell=\{J\in \mathcal{J}_R \mid \text{$J$ is of class $C^\ell$}\}.
$$
Its tangent space $T_J\mathcal{J}_R^\ell$ at $J$ consists of $C^\ell$-sections $Y\colon M\to \End(TM)$ satisfying
\begin{enumerate}
	\item[(i)] $Y=Y^*=JYJ$, where $Y^*$ denotes the adjoint operator with respect to the metric $g_J=\ow(\cdot,J\cdot)$,
	\item[(ii)] $R^*Y=-Y$.
\end{enumerate}
For $u\in \widehat{\mathcal{M}}(L,J)$ and $J\in \mathcal{J}_R^\ell$ we consider the operator (given by the linearization of the holomorphic curve equation $\bar{\p}_J(u)=0$ regarding $(u,J)$ as variables)
\begin{align*}
	\widetilde{{\bf D}}_{u,J}\colon W^{1,p}(u^*TM,u|_{\p D^2}^*TL)\oplus T_J\mathcal{J}_R^\ell &\to L^p(u^*TM) \\
	(\xi, Y) &\mapsto {\bf D}_u\xi + Y(u)\p_tu.
\end{align*}
Since $\im({\bf D}_u)\subset \im(\widetilde{{\bf D}}_{u,J})$ and ${\bf D}_u$ has finite dimensional cokernel, so does the operator $\widetilde{{\bf D}}_{u,J}$. Hence, $\widetilde{{\bf D}}_u$ has closed image. To show that $\widetilde{{\bf D}}_{u,J}$ is surjective, it therefore suffices to prove that the image of $\widetilde{{\bf D}}_{u,J}$ is dense. Assume to the contrary that there exists a nonzero $Z\in L^q(u^*TM)$, with $ 1/p+1/q=1$, which satisfies
\begin{align}\label{eq: transintegral}
	&\int_{D^2}\ow({\bf D}_u\xi , J(u)Z)\dvol = 0,\quad \text{for all $\xi\in W^{1,p}(u^*TM,u|_{\p D^2}^*TL)$},\nonumber \\
	&\int_{D^2}\ow(Y(u)\p_tu,J(u)Z)\dvol = 0,\quad \text{for all $Y\in T_J\mathcal{J}_R^\ell$}.
\end{align}
The first equation implies that $Z$ is of class $W^{1,q}$ and ${\bf D}_u^*Z=0$, where ${\bf D}_u^*$ denotes the formal adjoint operator of ${\bf D}_u$. By elliptic regularity, $Z$ is of class~$C^\ell$.\\\\
{\bf Claim.} \emph{$Z(z)=0$ for all $z\in \Inj_R(u)$.}
\vspace{0.2cm}

\noindent
Assume, by contradiction, that $Z(z_0)\ne 0$ for some $z_0\in \Inj_R(u)$. By linear algebra (see \cite[Lemma~3.2.2]{McSalJholo}), we can choose $Y_{z_0}\in \End(T_{z_0}M)$ such that $Y_{z_0}=Y^*_{z_0}=J(u(z_0))Y_{z_0}J(u(z_0))$ and $Y_{z_0}\p_tu(z_0)=Z(z_0)$. Take any section $Y\in T_J\mathcal{J}^\ell$ such that $Y(u(z_0))=Y_{z_0}$. Choose a neighborhood $V\subset D^2$ of $z_0$ such that $\ow(Y(u) \p_tu,J(u)Z)>0$ on $V$. Since $u(D^2\setminus V)$ and $R(u(D^2))$ are compact and $u(z_0)\notin u(D^2\setminus V)\cup R(u(D^2))$ (this follows as $z_0 \in \Inj_R(u)$), there exists a neighborhood $U\subset M$ of $u(z_0)$ such that
$$
u(D^2\setminus V)\cap U = \emptyset = R(u(D^2)) \cap U.
$$
Using a bump function supported in $U$, we construct a nonequivariant $Y\in T_J\mathcal{J}^\ell$ such that $\ow(Y(u)\p_tu,J(u)Z)>0$ on a small neighborhood $V'\subset V$ of $z_0$ and vanishes outside this neighborhood, yielding that the integral \eqref{eq: transintegral} with this~$Y$ is strictly positive. One directly checks that $R^*Y:=R_*YR_*\in T_J\mathcal{J}^\ell$. We consider the averaged tangent vector $\tilde{Y}=Y-R^*Y\in T_J\mathcal{J}_R^\ell$, which is now equivariant, i.e., $R^*\tilde{Y}=-\tilde{Y}$. We claim that the integral \eqref{eq: transintegral} with $\tilde{Y}$ is still strictly positive, which completes the above claim. To see this, it suffices to show that the integral \eqref{eq: transintegral} with $R^*Y$ vanishes. Since $R(u(D^2))\cap U=\emptyset$, the vector $R^*Y$ vanishes on $D^2$.  
This implies that $R^*Y(u) \p_t u(z)=0$ for all $z\in D^2$, which proves the subclaim.  Hence, this proves the claim that $Z(z)=0$ for all $z\in \Inj_R(u)$. 

Since $\Inj_R(u)$ is dense in $D^2$, we deduce that $Z\equiv 0$ so that the operator $\widetilde{{\bf D}}_u$ is surjective. The remaining proof is fairly standard, i.e., applying the parametric transversality theorem and Taubes' trick. We refer to \cite[Section~3.2]{McSalJholo}.
\end{proof}

\section{Proofs of the main theorem and Theorem \ref{thm: thmA}}\label{sec: proofofmainthm}
We are in position to prove the main theorem.
\begin{proof}[Proof of Main Theorem]
By Theorem \ref{thm: equitrans}, we can choose $J\in \mathcal{J}_R^{\reg}$. Since the involution $\mathcal{R}_1$ has no fixed point by Theorem \ref{thm: nofixedthm}, the restriction $\mathcal{R}_1$ to $\ev^{-1}(x)$ has no fixed point either. In other words, the elements in $\ev^{-1}(x)$ must come in pairs. This proves the theorem.
\end{proof}
As we mentioned in the introduction, we show the following simple topological result.
\begin{lemma}\label{lem: smoothinvolution}
	Any Lagrangian torus in $S^2\times S^2$ is given by the fixed point set of a smooth involution.
\end{lemma}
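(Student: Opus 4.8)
The plan is to deduce this from the classification of Lagrangian tori in $S^2\times S^2$ up to Lagrangian isotopy, transporting a fixed model involution along an ambient diffeomorphism.

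First I would record a model. Let $c\colon S^2\to S^2$ be the reflection fixing the equator (in coordinates, $c(x_1,x_2,x_3)=(x_1,x_2,-x_3)$). It is a smooth involution with $\Fix(c)$ the equator $S^1\subset S^2$, so $c\times c$ is a smooth involution of $S^2\times S^2$ with $\Fix(c\times c)=S^1\times S^1=\mathbb{T}_{\Cl}$. (Since $c$ reverses the area form, $c\times c$ is in fact antisymplectic --- this is exactly why $\mathbb{T}_{\Cl}$ is real --- but here only the smooth structure matters.)

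Next, given an arbitrary Lagrangian torus $L\subset S^2\times S^2$, I would invoke \cite[Theorem~A]{DGI}: $L$ is Lagrangian isotopic to $\mathbb{T}_{\Cl}$. In particular there is a smooth isotopy of embeddings $f_t\colon T^2\hookrightarrow S^2\times S^2$, $t\in[0,1]$, with $f_0$ parametrizing $\mathbb{T}_{\Cl}$ and $f_1$ parametrizing $L$. By the isotopy extension theorem for compact submanifolds of a compact manifold, this extends to an ambient smooth isotopy $\Phi_t$ of $S^2\times S^2$ with $\Phi_0=\id$ and $\Phi_t\circ f_0=f_t$; hence $\Phi:=\Phi_1$ is a diffeomorphism with $\Phi(\mathbb{T}_{\Cl})=L$. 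Setting $R_L:=\Phi\circ(c\times c)\circ\Phi^{-1}$, we get a smooth involution, being conjugate to one, with $\Fix(R_L)=\Phi(\Fix(c\times c))=\Phi(\mathbb{T}_{\Cl})=L$, which is the assertion.

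I do not expect a genuine obstacle here; this is why the statement is labelled a simple topological result. The only inputs are the quoted classification of \cite{DGI} and the classical isotopy extension theorem, together with the elementary fact that conjugating an involution by a diffeomorphism yields an involution whose fixed locus is the image of the old one. The single point worth emphasizing is that we use only the \emph{smooth} part of the isotopy furnished by \cite{DGI}, so no symplectic delicacy enters; in particular the resulting $R_L$ need not be antisymplectic, consistently with Theorem \ref{thm: thmA}.
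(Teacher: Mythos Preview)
Your argument is correct and essentially identical to the paper's own proof: both invoke \cite[Theorem~A]{DGI} for a Lagrangian isotopy to $\mathbb{T}_{\Cl}$, extend to an ambient diffeomorphism via isotopy extension, and conjugate the standard involution fixing $\mathbb{T}_{\Cl}$ by that diffeomorphism. The only cosmetic difference is the direction of the isotopy (you map $\mathbb{T}_{\Cl}$ to $L$ rather than $L$ to $\mathbb{T}_{\Cl}$), which of course is immaterial.
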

\begin{proof}
	Let $L$ be a Lagrangian torus in $S^2\times S^2$. By \cite[Theorem~1.1]{DGI}, $L$~is Lagrangian isotopic to $\mathbb{T}_{\Cl}$, namely there is an isotopy of Lagrangian embeddings from $L$ to $\mathbb{T}_{\Cl}$. By the smooth isotopy extension theorem, we choose an ambient isotopy $\phi_t$, $t\in [0,1]$ of $S^2\times S^2$ from $L$ to $\mathbb{T}_{\Cl}$, i.e., $\phi_t$ are diffeomorphisms for all $t\in [0,1]$, $\phi_0=\id_{S^2\times S^2}$ and $\phi_1(L)=\mathbb{T}_{\Cl}$. We write $R_{\Cl}$ for the obvious antisymplectic involution on $S^2\times S^2$ with fixed point set $\mathbb{T}_{\Cl}=\Fix(R_{\Cl})$. For simplicity, we let $\phi:=\phi_1$. Consider the smooth involution on $S^2\times S^2$ given~by
	$$
	R_L:=\phi^{-1}\circ R_{\Cl}\circ \phi.
	$$
	We claim that $\Fix(R_L)=L$. This follows from the observation that
	\begin{eqnarray*}
		x \in \Fix(R_L) &\iff& R_{\Cl}\big(\phi(x)\big)= \phi(x) \\
		&\iff& \phi(x)\in \mathbb{T}_{\Cl} \\
		&\iff& x\in \phi^{-1}(\mathbb{T}_{\Cl})=L.
	\end{eqnarray*}
This completes the proof of the lemma.
\end{proof}
\begin{proof}[Proof of Theorem \ref{thm: thmA}]
Results of Chekanov--Schlenk \cite[Lemmata~5.2 and 5.3]{ChekanovSchlenk} say that for any $x\!\in\! \mathbb{T}_{\Ch}$ the number of elements in $\ev^{-1}(x)\!\subset\! \mathcal{M}_1(\mathbb{T}_{\Ch},J_0)$ is five and they are regular with respect to the standard almost complex structure~$J_0$. Thus we have $n(\mathbb{T}_{\Ch})=1\in \Z_2$. By the main theorem, the Chekanov torus $\mathbb{T}_{\Ch}$ in $S^2\times S^2$ is not real. On the other hand, it follows from Lemma~\ref{lem: smoothinvolution} that $\mathbb{T}_{\Ch}$ is the fixed point set of a smooth involution.
\end{proof}

\section{Appendix}\label{sec: appendix}
\subsection{Some properties of $J$-holomorphic discs} 
Throughout this appendix, we assume the following setup. 
Let $(M,\ow)$ be a symplectic manifold and let $L$ be a Lagrangian in $M$. Fix $J\in \mathcal{J}$. We consider two $J$-holomorphic discs $u_j\colon (D^2,\p D^2)\to (M,L)$ for $j=1,2$.

The following lemma is a typical application of the (relative) Carleman similarity principle. See \cite[Proposition~2.1]{ZehAnnulus} or  \cite[Theorem~3.5]{LazGAFA}.
\begin{lemma}\label{lem: carleman}
Let $u\colon U\to M$ be a nonconstant $J$-holomorphic disc defined on a connected open set $U\subset D^2$. Then the following sets are finite:
\begin{itemize}
	\item The set of critical points $\crit(u):=\{z\in U \mid du(z)=0\}$ of $u$,
	\item the preimage $u^{-1}(x)$ for each $x\in M$.
\end{itemize}
\end{lemma}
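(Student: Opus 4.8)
The plan is to invoke the Carleman similarity principle in its standard form, exactly as in the references already cited in the statement (\cite[Proposition~2.1]{ZehAnnulus}, \cite[Theorem~3.5]{LazGAFA}), and then derive the two finiteness claims from the resulting local normal form together with the identity theorem. First I would recall that the similarity principle says: near any point $z_0\in U$ there is a neighborhood, a continuous invertible matrix-valued function $\Phi$, and a holomorphic map $\sigma$ (into $\C^n$ with $n=\dim_\C M$) such that $u(z)-u(z_0)=\Phi(z)\sigma(z)$ in suitable local coordinates on $M$ centered at $u(z_0)$, with $\sigma(z_0)=0$ and $\Phi(z_0)$ invertible. A key consequence is that $du(z_0)=0$ if and only if $d\sigma(z_0)=0$, and more generally the local behavior of $u$ near $z_0$ mimics that of the holomorphic map $\sigma$; in particular $z_0$ is an \emph{isolated} zero of $z\mapsto u(z)-u(z_0)$ unless $u$ is locally constant near $z_0$.

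The next step is to rule out the locally-constant case globally: since $u$ is nonconstant and $U$ is connected, the identity theorem (applied to $\sigma$, or directly to the pseudoholomorphic map via the similarity principle) forbids $u$ from being constant on any open subset of $U$. Hence for every $z_0\in U$ the point $z_0$ is isolated both in $\crit(u)$ and in $u^{-1}(u(z_0))$. This shows $\crit(u)$ is a discrete subset of $U$, and that $u^{-1}(x)$ is discrete for every $x\in M$. To upgrade discreteness to finiteness I would use that $u$ is a disc, i.e.\ the domain $U\subset D^2$ with $u$ extending continuously (indeed in the setup of the paper, smoothly) up to the relevant boundary: a discrete subset of a domain can accumulate only on the boundary, so I would invoke the continuity of $u$ up to $\partial D^2$ together with the fact that $u(\partial D^2)\subset L$ and the (relative) similarity principle near boundary points of $U\cap \partial D^2$ — or, more simply, restrict attention to a compact exhaustion and note that any accumulation point of $\crit(u)$ or of $u^{-1}(x)$ inside $U$ would contradict the isolatedness just established.

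The one genuine subtlety — and the step I expect to be the main obstacle to state cleanly — is exactly this passage from ``discrete in $U$'' to ``finite''. For an honest open disc $U=\Int(D^2)$ with no control at the boundary, a discrete set can of course be infinite (it can accumulate on $\partial D^2$), so finiteness must be using that $u$ is a \emph{$J$-holomorphic disc} in the sense of \eqref{eq: spaceofJdiscs}, i.e.\ defined on all of $(D^2,\partial D^2)$ and smooth up to the boundary, with boundary in $L$; I would therefore phrase the lemma's hypotheses to match that standing assumption and then apply the \emph{boundary} similarity principle of \cite[Theorem~3.5]{LazGAFA} at points of $\partial D^2$ to conclude that critical points and preimages remain isolated there as well. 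Since $D^2$ is compact, a set that is isolated at every point of $D^2$ is finite, which gives both bullet points. I would keep the write-up short, citing \cite[Proposition~2.1]{ZehAnnulus} and \cite[Theorem~3.5]{LazGAFA} for the interior and boundary versions of the similarity principle respectively, and leaving the (routine) compactness argument to the reader.
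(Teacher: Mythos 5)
Your proposal is correct and matches the paper's treatment: the paper offers no argument for this lemma beyond citing \cite[Proposition~2.1]{ZehAnnulus} and \cite[Theorem~3.5]{LazGAFA}, and the proof you sketch (interior and boundary similarity principle giving isolatedness, unique continuation ruling out local constancy, compactness of the closed disc $D^2$ upgrading discreteness to finiteness) is exactly the standard argument those references encode. Your observation that finiteness, as opposed to mere discreteness, really uses that $u$ is a disc smooth up to the boundary of the compact domain with boundary on $L$ is the right reading of the hypothesis and is precisely how the lemma is used in Proposition~\ref{prop: imagediffdiscrete}.
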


The following proposition is proved in \cite[Theorem~4.13]{Laz}. See also \cite[Corollary~2.5.4]{McSalJholo} for the case of $J$-holomorphic spheres.
\begin{proposition}\label{prop: imagesamerep}
Suppose that $u_1$ and $u_2$ are simple. Then $u_1(D^2)=u_2(D^2)$ and $u_1(\p D^2)=u_2(\p D^2)$ if and only if there exists $\sigma\in \Aut(D^2)$ such that $u_1 = u_2 \circ \sigma$.
\end{proposition}
The following proposition can be regarded as the global version of the (relative) Carleman similarity principle. For the reader's convenience, we reproduce \cite[Proposition~5.1.3]{Hofer} here.
\begin{proposition}\label{prop: hofer}
	Suppose that $u_1(D^2)\ne u_2(D^2)$. If there exist sequences $z_\nu^1,z_\nu^2 \in D^2$ such that
	\begin{itemize}
		\item $z_\nu^j \to z^j_0\in D^2$ for $j=1,2$,
		\item $z_\nu^j\ne z_0^j$ for all $\nu \in \N$ and $j=1,2$,
		\item $u_1(z_\nu^1)=u_2(z_\nu^2)$ for all $\nu\in \N$ (and hence $u_1(z_0^1)=u_2(z_0^2)$),
	\end{itemize}
	that is, $u_1(z_0^1)=u_2(z_0^2)$ is a limit point of intersections $u_1(D^2)\cap u_2(D^2)$, then $z_0^j\in \crit(u_j)$ for $j=1,2$.
\end{proposition}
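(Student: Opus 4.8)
The plan is to argue by contradiction, reducing everything to the (relative) Carleman similarity principle cited around Lemma~\ref{lem: carleman}. Since the hypotheses are symmetric under interchanging $(u_1,z^1_\nu,z^1_0)$ with $(u_2,z^2_\nu,z^2_0)$, it suffices to prove $z^1_0\in\crit(u_1)$; so suppose instead that $du_1(z^1_0)\ne0$, and deduce $u_1(D^2)=u_2(D^2)$, contradicting the hypothesis. Note that both maps are then nonconstant: $u_1$ trivially, and if $u_2$ were constant then $u_1(z^1_\nu)=u_2(z^2_0)$ for all $\nu$ would force $u_1$ to be constant by finiteness of preimages (Lemma~\ref{lem: carleman}), again contradicting $u_1(D^2)\ne u_2(D^2)$.

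\emph{Local step.} Since $u_1$ is an immersion near $z^1_0$, choose $U\ni z^1_0$ on which $u_1$ is an embedding and a chart near $p:=u_1(z^1_0)$ with coordinates $(a,b)\in\C\times\C^{n-1}$, arranged so that $u_1(U)=\{b=0\}$ and the pushforward of $J$ preserves $T\{b=0\}$ along $\{b=0\}$; thus $\{b=0\}$ is a $J$-holomorphic submanifold. Write $u_2=(\alpha,\beta)$ on a small neighborhood of $z^2_0$ whose image lies in the chart. The $J$-holomorphicity of $\{b=0\}$ makes the transverse component $\beta$ satisfy a linear Cauchy--Riemann type equation $\bar\p\beta+C\beta=0$ with continuous coefficient $C$, while $u_2(z)\in u_1(U)$ is equivalent to $\beta(z)=0$. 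For large $\nu$ we have $z^1_\nu\in U$, hence $\beta(z^2_\nu)=0$ with $z^2_\nu\to z^2_0$ and $z^2_\nu\ne z^2_0$; by the similarity principle ($\beta=\Phi\sigma$ with $\Phi$ invertible, $\sigma$ holomorphic), the accumulating zeros force $\beta\equiv0$ on a neighborhood of $z^2_0$, i.e.\ $u_2$ maps a neighborhood of $z^2_0$ into $u_1(U)\subset u_1(D^2)$. If $z^1_0\in\p D^2$ one works in a half-disc chart in which $L$ corresponds to a totally real slice and invokes the relative similarity principle \cite[Theorem~3.5]{LazGAFA} together with removable boundary singularities \cite[Theorem~B.1]{LazGAFA}; nothing else changes.

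\emph{Propagation.} Let $W\subset\Int D^2$ be the set of $z$ admitting a neighborhood $N$ and a holomorphic map $h\colon N\to D^2$ with $h(z)\notin\crit(u_1)$ and $u_2|_N=u_1\circ h$. The local step gives $W\ne\emptyset$: take $h=(u_1|_U)^{-1}\circ u_2$, which is holomorphic because $u_1|_U$ is a $J$-holomorphic embedding, and nonconstant because $h(z^2_\nu)=z^1_\nu\to z^1_0$ with $z^1_\nu\ne z^1_0$. Clearly $W$ is open, and it is relatively closed in $\Int D^2$: if $z_k\in W$, $z_k\to z_*\in\Int D^2$, write $u_2(z_k)=u_1(w_k)$ and pass to $w_k\to w_*$ with $u_1(w_*)=u_2(z_*)$; then $(w_k,z_k)$ are accumulating intersections at $(w_*,z_*)$, and when $du_1(w_*)\ne0$ the local step places $z_*\in W$. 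The conditions $z_k\ne z_*$ and $w_k\ne w_*$ for large $k$ come from finiteness of $u_2^{-1}(\mathrm{pt})$ and $u_1^{-1}(\mathrm{pt})$ (Lemma~\ref{lem: carleman}), and the finitely many exceptional limits $w_*\in\crit(u_1)$ are absorbed by first running the argument on the connected open set $\Int D^2\setminus\crit(u_1)$. Hence $W=\Int D^2$, so $u_2(D^2)\subset u_1(D^2)$. This inclusion now lets one manufacture accumulating intersections near a regular point of $u_2$ (pick $w_*\notin\crit(u_2)$ with $u_2(w_*)\notin u_1(\crit(u_1))$, which exists since both sets are finite, and take nearby points), so the symmetric argument gives $u_1(D^2)\subset u_2(D^2)$. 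Therefore $u_1(D^2)=u_2(D^2)$, the desired contradiction.

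\emph{Main obstacle.} The local step is a routine application of the similarity principle; the genuine difficulty is the propagation step, namely upgrading the one-point-local inclusion to equality of the two images for discs that are not assumed simple, so that self-intersections and critical points obstruct a naive analytic continuation. This is exactly where the finiteness statements of Lemma~\ref{lem: carleman} are used, and where the boundary case must be reconciled with the relative versions of the tools.
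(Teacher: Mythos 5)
The paper itself offers no proof to compare against: Proposition~\ref{prop: hofer} is quoted verbatim from Abbas--Hofer \cite[Proposition~5.1.3]{Hofer}, so your argument has to stand on its own. Its interior skeleton is the standard one and is essentially sound: assume $du_1(z_0^1)\ne 0$, write $u_2$ near $z_0^2$ in coordinates adapted to the embedded $J$-invariant surface $u_1(U)=\{b=0\}$, apply the similarity principle to the transverse component $\beta$, propagate by an open/closed argument, and then run the symmetric argument to upgrade $u_2(D^2)\subset u_1(D^2)$ to equality (a step that is genuinely needed and that you correctly do not skip, since strict inclusion of images is possible for non-simple discs).

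The genuine gap is the boundary case, which you dismiss in one sentence (``one works in a half-disc chart \dots nothing else changes'') and which cannot be avoided: the statement allows $z_0^j\in\partial D^2$, and even when both $z_0^1,z_0^2$ are interior your closedness step produces limits $w_*$ of $u_1$-preimages that may lie on $\partial D^2$. In these configurations the difficulty is not merely that the similarity principle must be applied on a half-disc. If the embedded point of $u_1$ lies on $\partial D^2$, then $u_1(U)$ is only a half-disc with boundary on $L$, and there is no given $2$-submanifold $\{b=0\}$ with $J$-invariant tangent spaces containing it; without such an extension both the identification $\{u_2\in u_1(U)\}=\{\beta=0\}$ and the estimate producing $\bar\partial\beta+C\beta=0$ break down. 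If instead $z_0^2\in\partial D^2$ while $z_0^1$ is interior, then $\beta$ lives on a half-disc and its boundary values are constrained only by $u_2(\partial D^2)\subset L$; to invoke the relative similarity principle of \cite{LazGAFA} one needs coordinates in which $L$ is totally real \emph{and} compatible with the splitting adapted to $u_1(U)$, but the $J$-complex line $T_pu_1(U)$ may meet $T_pL$ trivially, so such a simultaneous normalization need not exist. This is exactly where Lazzarini and Abbas--Hofer do real work, and ``nothing else changes'' is not a substitute. Two smaller points: in the propagation step the set you remove, $\Int D^2\setminus\crit(u_1)$, lives in the wrong copy of the disc; the obstruction is a limit $w_*\in\crit(u_1)$ of preimages, and the correct (easy) repair is to run the open/closed argument on $\Int D^2\setminus u_2^{-1}\bigl(u_1(\crit(u_1))\bigr)$, which is open, dense and connected because the removed set is finite by Lemma~\ref{lem: carleman}. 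And in the final symmetric step you should choose the regular point of $u_2$ with image off $L$ (possible since $u_2$ is nonconstant), so that the accumulation point of its $u_1$-preimages is forced into $\Int D^2$; otherwise you land once more in the boundary case you have not handled.
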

Motivated by the notation in \cite[Lemma~5.12]{SeidelQuiv}, we abbreviate by
\begin{equation}\label{eq: Su_1u_2set}
S(u_1,u_2):=\{z\in D^2 \mid u_1(z)\notin u_2(D^2)\}=\big(u_1^{-1}(u_2(D^2))\big)^c.	
\end{equation}
It is clear that $S(u_1,u_2)$ is open. For the following, see also \cite[Proposition~2.4.4]{McSalJholo} in the case of spheres.
\begin{proposition}\label{prop: imagediffdiscrete}
Suppose that $u_1$ and $u_2$ are simple and $u_1(D^2)\ne u_2(D^2)$. Then $S(u_1,u_2)$ is open and dense in $D^2$.
\end{proposition}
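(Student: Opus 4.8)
The plan is to prove Proposition \ref{prop: imagediffdiscrete} by a topological argument combining Baire category with the finiteness and structure results already collected in this appendix. Since $S(u_1,u_2)$ is already known to be open (it is the complement of $u_1^{-1}(u_2(D^2))$, and $u_2(D^2)$ is compact hence closed), the entire content is density. The strategy is to show that the complement $A:=u_1^{-1}(u_2(D^2))$ has empty interior in $D^2$. Suppose not: then $A$ contains a nonempty open set $W\subset D^2$, so $u_1(W)\subset u_2(D^2)$.

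First I would dispose of the degenerate cases. If $u_1$ is constant, then since $u_1$ is simple it cannot be (a nonconstant simple disc has dense injective set, and in any case $N_L>0$ forces the relevant discs to be nonconstant); more carefully, simplicity as used here presupposes $du$ is somewhere nonzero, so both $u_1,u_2$ are nonconstant. Then by Lemma \ref{lem: carleman}, $\crit(u_1)$ and $\crit(u_2)$ are finite, hence $D^2\setminus(\crit(u_1)\cup u_1^{-1}(u_2(\crit(u_2))))$ is open and dense — note $u_1^{-1}(x)$ is finite for each $x$ by Lemma \ref{lem: carleman}, and $u_2(\crit(u_2))$ is a finite set. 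So after shrinking $W$ I may assume every $z\in W$ is a regular point of $u_1$ and $u_1(z)\notin u_2(\crit(u_2))$.

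Now the key step. Fix $z\in W$. Since $u_1(z)\in u_2(D^2)$, choose $w\in D^2$ with $u_2(w)=u_1(z)$; then $w\notin\crit(u_2)$ by our arrangement, and also $z\notin\crit(u_1)$. I want to produce approaching sequences to feed into Proposition \ref{prop: hofer}: since $W$ is open, pick $z_\nu\to z$ in $W$ with $z_\nu\ne z$; each $u_1(z_\nu)\in u_2(D^2)$, so there is $w_\nu\in D^2$ with $u_2(w_\nu)=u_1(z_\nu)$. By compactness of $D^2$, pass to a subsequence so $w_\nu\to w_0$ for some $w_0\in D^2$; then $u_2(w_0)=u_1(z)$. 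If $w_\nu\ne w_0$ for infinitely many $\nu$, Proposition \ref{prop: hofer} (applicable because $u_1(D^2)\ne u_2(D^2)$ by hypothesis) gives $z\in\crit(u_1)$, contradicting the choice of $W$. Hence $w_\nu=w_0$ for all large $\nu$, i.e.\ $u_2(w_0)=u_1(z_\nu)=u_1(z)$ for infinitely many distinct $z_\nu$ together with $z$ — so $u_1^{-1}(u_1(z))$ is infinite, contradicting the finiteness in Lemma \ref{lem: carleman}. This contradiction shows $A$ has empty interior, so $S(u_1,u_2)$ is dense.

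The main obstacle is the bookkeeping in the last paragraph: one must be careful that the ``finitely many bad $\nu$'' extraction is done uniformly, and that Proposition \ref{prop: hofer} is being invoked with the hypothesis $z_\nu^j\ne z_0^j$ genuinely satisfied on both sides — the case split on whether $w_\nu$ equals its limit is exactly what makes the dichotomy work, and it is worth stating explicitly that we only need one of the two sequences to be eventually constant to derive the finiteness contradiction. A secondary subtlety, which I would flag but not belabor, is ensuring the shrinking of $W$ preserves openness and nonemptiness at each stage; this is automatic since we are only removing finitely many points or nowhere-dense sets each time. Everything else is a routine application of the cited results.
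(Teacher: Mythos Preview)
Your proof is correct and follows essentially the same route as the paper's: assume the complement contains an open set, pick a regular point $z_0$ there, run a sequence $z_\nu\to z_0$ with lifts $w_\nu$ in the target disc, and use Proposition \ref{prop: hofer} to force $w_\nu$ eventually constant, whence $u_1^{-1}(u_2(w_0))$ is infinite, contradicting Lemma \ref{lem: carleman}. The only difference is that you also arrange $u_1(z)\notin u_2(\crit(u_2))$, which the paper does not bother with since it is not needed for the argument; and your phrasing ``infinitely many distinct $z_\nu$'' implicitly assumes the $z_\nu$ were chosen pairwise distinct, which is harmless but worth making explicit.
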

\begin{proof}
It suffices to show that $S(u_1,u_2)$ is dense. Arguing by contradiction we assume that $S(u_1,u_2)$ is not dense. Then there exists an open set $U$ in $D^2$ such that $U\cap S(u_1,u_2)=\emptyset$. Since $\crit(u_1)$ is finite, we can pick a point $z_0\in U\setminus \crit(u_1)$. Choose a sequence $z_\nu\in U\setminus \{z_0\}$ that converges to~$z_0$. Since we~know
$$
u_1(U)\subset u_2(D^2)
$$
and hence $u_1(z_\nu)\in u_2(D^2)$, we can choose a sequence $w_\nu \in D^2$ such that 
\begin{equation}\label{eq: denseseteq}
u_1(z_\nu)=u_2(w_\nu)\quad\text{ for all $\nu\in \N$}
\end{equation}
and $w_\nu \to w_0\in D^2$. By a limit argument, we obtain $u_1(z_0)=u_2(w_0)$. Recall that $z_0$ is not a critical point of $u_1$. By Proposition \ref{prop: hofer}, we must have that $w_\nu=w_0$ for all large $\nu\gg 1$. (Otherwise, $z_0\in \crit(u_1)$.) From \eqref{eq: denseseteq}, we see that
$$
z_\nu \in u_1^{-1}(u_2(w_0))
$$
for all large $\nu\gg1$. Hence, the set $u_1^{-1}(u_2(w_0))$ consists of infinitely many points, which contradicts Lemma \ref{lem: carleman}. This completes the proof.
\end{proof}
\subsection{A simple proof of the Main Theorem for $S^2\times S^2$}
In the particular case of a monotone $S^2\times S^2$, we give a short proof of the main theorem which contains simple but interesting ideas. 

Let $L=\Fix(R)$ be a real Lagrangian with $N_L= 2$ in $S^2\times S^2$, where $R$ is an antisymplectic involution of $S^2\times S^2$. Fix $J\in \mathcal{J}_R$. We define the \emph{moduli space of Maslov index 2 $J$-holomorphic discs with boundary on $L$ passing through $x\in L$},
$$
\mathcal{M}(L,J;x)=\{u\in \widehat{\mathcal{M}}(L,J)\mid u(1)=x\}/\Aut(D^2,1),
$$
where $\widehat{\mathcal{M}}(L,J)$ is defined in \eqref{eq: spaceofJdiscs}. 
The following observation is simple, but not obvious.
\begin{lemma}\label{lem: embedded_disc}
Let $J\in \mathcal{J}_R$. Then any Maslov index 2 $J$-holomorphic disc in $S^2\times S^2$ with boundary on $L$ is embedded.
\end{lemma}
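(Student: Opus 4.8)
The plan is to deduce the statement from the adjunction inequality for $J$-holomorphic spheres in a four-manifold, after replacing $u$ by its double. Write $A=[S^2\times\pt]$ and $B=[\pt\times S^2]$ for the standard generators of $H_2(S^2\times S^2;\Z)$, so that $A\cdot A=B\cdot B=0$, $A\cdot B=1$, and $c_1(aA+bB)=2a+2b$; in particular every class in $H_2(S^2\times S^2;\Z)$ has \emph{even} first Chern number. As recalled in Section~\ref{sec: disccount}, a $J$-holomorphic disc whose Maslov index equals the minimal Maslov number is simple, so $u$ is simple; moreover $u$ is nonconstant, since $E(u)=\ow(u)>0$ by monotonicity (as $\mu(u)=2\ne0$). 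Since $J\in\mathcal{J}_R$ and $u(\p D^2)\subset L=\Fix(R)$, the map $z\mapsto R(u(\bar z))$ is again $J$-holomorphic on $D^2$ and agrees with $u$ along $\p D^2$; gluing the two copies along their common boundary produces a continuous map $\hat u\colon S^2=D^2\cup_{\p D^2}D^2\to S^2\times S^2$ that is $J$-holomorphic off the equatorial circle. By the Schwarz reflection principle for pseudoholomorphic curves with boundary on the fixed locus of an antisymplectic involution --- the Hofer--Lizan--Sikorav doubling; here one combines boundary regularity for pseudoholomorphic maps with totally real boundary \cite[Theorem~B.1]{LazGAFA} with interior elliptic regularity --- the map $\hat u$ is smooth and $J$-holomorphic on all of $S^2$; it is nonconstant, as $E(\hat u)=2E(u)>0$, and the standard identity relating the Maslov index of a disc to the first Chern number of its double gives $c_1([\hat u])=\mu(u)=2$.

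Next I would verify that $\hat u$ is simple, which is needed to apply adjunction. A nonconstant $J$-holomorphic sphere factors as $\hat u=w\circ\varphi$ with $w$ simple and $\varphi\colon S^2\to S^2$ holomorphic of degree $d\ge1$ \cite[Section~2.5]{McSalJholo}, so $2=c_1([\hat u])=d\cdot c_1([w])$; spherical monotonicity forces $c_1([w])>0$, and by the evenness noted above $c_1([w])\ge2$, whence $d=1$. The adjunction formula for a simple $J$-holomorphic sphere in a four-manifold \cite[Section~2.6]{McSalJholo} now reads
\[
2\,\delta(\hat u)=[\hat u]\cdot[\hat u]-c_1([\hat u])+2,
\]
where $\delta(\hat u)\ge0$ and $\hat u$ is embedded if and only if $\delta(\hat u)=0$. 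Writing $[\hat u]=aA+bB$, the constraint $c_1([\hat u])=2$ gives $a+b=1$, while $[\hat u]\cdot[\hat u]=2ab$, so $2\,\delta(\hat u)=2ab$. Since $\delta(\hat u)\ge0$ we get $ab\ge0$, and together with $a+b=1$ and $a,b\in\Z$ this leaves only $\{a,b\}=\{0,1\}$; hence $\delta(\hat u)=0$ and $\hat u$ is an embedding. As $u$ is the composition of a biholomorphism of $D^2$ onto one of the two closed hemispheres of $S^2$ with $\hat u$, it follows that $u$ is embedded, as claimed.

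The step I expect to be the main obstacle is the construction of the double: confirming that $\hat u$ is genuinely smooth and $J$-holomorphic across the seam, and pinning down the identity $c_1([\hat u])=\mu(u)$. Both are classical --- a Schwarz reflection argument in the pseudoholomorphic setting, respectively a computation of the first Chern class of the doubled bundle $\hat u^{*}T(S^2\times S^2)$ in terms of the Maslov index of the Lagrangian boundary condition --- but they must be set up with some care. Once they are in place, together with the parity of first Chern numbers on $S^2\times S^2$ (used for the simplicity of $\hat u$), the remainder of the argument is purely formal intersection-theoretic bookkeeping.
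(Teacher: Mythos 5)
Your proposal is correct and follows essentially the same route as the paper: double $u$ via Schwarz reflection using $R$, identify $c_1$ of the double with $\mu(u)=2$, use the minimal Chern number (parity) to get simplicity, and conclude embeddedness from the adjunction inequality being an equality, which then descends to the half-disc $u$. The only difference is cosmetic --- you spell out the simplicity of the double via the factorization $\hat u=w\circ\varphi$, where the paper simply invokes that the minimal Chern number of $S^2\times S^2$ is $2$.
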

\begin{proof}
	Let $u$ be such a disc in $S^2\times S^2$. Note that $S^2\cong \C P^1$ is the union of two  closed discs $D_1$ and $D_2$ such that $D_1$ is identified with the unit disc $D^2\subset \C$ and $D_2=\rho(D_1)$, where $\rho(z)=\bar{z}^{-1}$ is the antiholomorphic involution of $\C P^1=\C \cup \{\infty\}$. We define its \emph{double} $u^\sharp$ as the $J$-holomorphic sphere
	$$
	u^\sharp\colon S^2 \to S^2\times S^2, \quad u^\sharp(z)=\begin{cases}
		u(z), & z\in D_1\cong D^2, \\
		R(u(\rho(z))), & z\in D_2,
	\end{cases}
	$$
	which is \emph{smooth} by the Schwarz reflection principle.
	Recall that $H_2(S^2\times S^2;\Z)$ is generated by the homology classes $A_1=[S^2\times \{\pt\} ]$ and $A_2=[\{\pt\}\times S^2]$.
Since $c_1(S^2\times S^2)[u^\sharp]=\mu(u)=2$, we deduce that $[u^\sharp]=kA_1+(1-k)A_2$ for some $k\in \Z$.
Note that the minimal Chern number of $S^2\times S^2$ is 2 and hence that $u^\sharp$ is simple. It follows from the adjunction inequality, see \cite[Theorem~2.6.4]{McSalJholo} and \cite[Theorem~2.51]{Wendl},  that
\begin{equation}\label{eq: adj_ineq}
[u^\sharp]\bullet[u^\sharp] -c_1(S^2\times S^2)[u^\sharp]+\chi(S^2) = 2k(1-k)\ge 0,	
\end{equation}
which implies that $[u^\sharp]=A_i$ for some $i=1,2$. Moreover, since the equality holds in \eqref{eq: adj_ineq}, we deduce that $u^\sharp$ is embedded. As a result, the half disc $u$ is embedded as well.
\end{proof}
The automatic transversality result of Hofer--Lizan--Sikorav \cite[Theorem~2 and Remark~(1) on pg.\ 151]{HoferLizanSikorav} implies that any embedded $J$-holomorphic disc of Maslov index 2 is \emph{regular}, i.e., its associated operator ${\bf D}_u$ defined in Section~\ref{sec: eqtrans} is surjective. Hence, we obtain the following immediate consequence of Lemma~\ref{lem: embedded_disc}.
\begin{proposition}\label{prop: automatic_trans}
Every $J\in \mathcal{J}_R$ is regular.	
\end{proposition}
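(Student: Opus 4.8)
The plan is to read off the proposition directly from Lemma~\ref{lem: embedded_disc} together with four--dimensional automatic transversality. Fix $J\in \mathcal{J}_R$. By the definition of regularity given in Section~\ref{sec: eqtrans}, it suffices to show that the Cauchy--Riemann operator ${\bf D}_u$ is surjective for every $u\in \widehat{\mathcal{M}}(L,J)$. So fix such a $u$. It has Maslov index $\mu(u)=2$ and is therefore non-constant; moreover Lemma~\ref{lem: embedded_disc} shows that $u$ is embedded, hence in particular an immersion and somewhere injective.

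Next I would invoke the automatic transversality theorem of Hofer--Lizan--Sikorav \cite[Theorem~2 and Remark~(1), p.~151]{HoferLizanSikorav}. It applies to immersed $J$-holomorphic curves with totally real (here Lagrangian) boundary in an almost complex manifold of real dimension four, and asserts surjectivity of the associated linearized operator once the Riemann--Roch index of the normal problem is non-negative. Since $\dim_\R(S^2\times S^2)=4$ and $\mu(u)=2$, this numerical hypothesis is met, so ${\bf D}_u$ is onto. As $u\in \widehat{\mathcal{M}}(L,J)$ was arbitrary, $J$ is regular, and since $J\in \mathcal{J}_R$ was arbitrary, every element of $\mathcal{J}_R$ is regular.

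There is essentially no obstacle here beyond what has already been done: the entire content sits in Lemma~\ref{lem: embedded_disc}, whose proof uses the doubling construction and the adjunction inequality in $S^2\times S^2$. The one point that deserves a careful sentence is checking that the hypotheses of the Hofer--Lizan--Sikorav criterion --- immersedness, the four-dimensionality of the target, and the non-negativity of the relevant index for a Maslov index~$2$ disc with Lagrangian boundary --- are all in force; once this is recorded, the proposition is immediate and no parametric perturbation of $J$ inside $\mathcal{J}_R$ is needed, which is precisely the advantage of this appendix subsection over the general argument of Section~\ref{sec: eqtrans}.
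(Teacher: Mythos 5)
Your argument is correct and is essentially the paper's own proof: both deduce from Lemma~\ref{lem: embedded_disc} that every Maslov index $2$ disc with boundary on $L$ is embedded, and then invoke the Hofer--Lizan--Sikorav automatic transversality criterion \cite{HoferLizanSikorav} in the four-dimensional target to conclude that ${\bf D}_u$ is surjective for every such $u$, so that every $J\in\mathcal{J}_R$ is regular without any perturbation. The only point to phrase carefully is the exact numerical hypothesis of the criterion (for an embedded disc the normal Maslov index is $\mu(u)-2=0$, which satisfies the required bound), but this is exactly how the paper uses it.
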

By Gromov's compactness theorem, $\mathcal{M}(L,J;x)$ is a compact zero dimensional smooth manifold for all $J\in \mathcal{J}_R$. The mod 2 cardinality of the finite set $\mathcal{M}(L,J;x)$ defines the invariant $n(L)\in \Z_2$, which is independent of the choice of $J\in \mathcal{J}_R$ and $x\in L$. To show that $n(L)=0$, consider the involution $\mathcal{R}$ of $\mathcal{M}(L,J;x)$ defined by
$$
\mathcal{R}[u]=[R\circ u\circ \rho],
$$
where $\rho$ is complex conjugation on $D^2\subset \C$.
\begin{proposition}\label{prop: nofixed_S2S2}
The involution $\mathcal{R}$ has no fixed point.
\end{proposition}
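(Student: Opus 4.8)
The plan is to reduce Proposition \ref{prop: nofixed_S2S2} to Theorem \ref{thm: nofixedthm}, which was established without any monotonicity hypothesis and therefore applies verbatim in the present setting. Indeed, suppose $\mathcal{R}[u]=[u]$ for some $[u]\in \mathcal{M}(L,J;x)$. By definition of the moduli space $\mathcal{M}(L,J;x)=\{u\in \widehat{\mathcal{M}}(L,J)\mid u(1)=x\}/\Aut(D^2,1)$, the equality $\mathcal{R}[u]=[u]$ means that $R\circ u\circ \rho\circ \sigma = u$ for some $\sigma\in \Aut(D^2,1)\subset \Aut(D^2)$. This is precisely the situation excluded by Theorem \ref{thm: nofixedthm}: for $J\in \mathcal{J}_R$ and any $u\in\widehat{\mathcal{M}}(L,J)$ there is no $\sigma\in \Aut(D^2)$ with $R\circ u\circ \rho\circ \sigma = u$. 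Hence $\mathcal{R}$ has no fixed point.

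Concretely, I would write: since $\mathbb{T}_{\Ch}$-type discs here have $\mu(u)=N_L=2$, the disc $u$ is simple (as recalled in Section \ref{sec: disccount}, every $J$-holomorphic disc with $\mu(u)=N_L$ is simple because $N_L>0$), so Lemma \ref{lem: rhoisinvolution} and Lemma \ref{lem: 2discsdecomp} apply: a fixed point would yield a decomposition $[u]=[u_1]+[u_2]\in\pi_2(S^2\times S^2,L)$ with $E(u_j)>0$ and $E(u_1)=E(u_2)$, whence by monotonicity of $L$ we get $\mu[u_1]=\mu[u_2]\ge 2$ and therefore $\mu[u]\ge 4$, contradicting $\mu[u]=2$. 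Alternatively, and even more directly, one simply invokes Theorem \ref{thm: nofixedthm} as a black box, since $S^2\times S^2$ with the monotone product form is a closed spherically monotone symplectic manifold and $L=\Fix(R)$ is a real Lagrangian with $N_L=2$, so all the hypotheses of that theorem are met.

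There is essentially no obstacle here: the only thing to be careful about is bookkeeping of the automorphism groups — the involution $\mathcal{R}$ on $\mathcal{M}(L,J;x)$ is taken modulo $\Aut(D^2,1)$ rather than $\Aut(D^2)$, but a fixed point modulo the smaller group is a fortiori a fixed point modulo the larger group, so the conclusion of Theorem \ref{thm: nofixedthm} still forbids it. (One should also note in passing that $\mathcal{R}$ is well-defined on $\mathcal{M}(L,J;x)$: this follows from the same computation as in the Lemma in Section \ref{sec: involutiononmoduli}, since $R$ preserves $L=\Fix(R)$ and maps the marked point $u(1)=x\in L$ to $R(u(\rho(1)))=R(u(1))=x$, and because conjugation by $\rho$ carries $\Aut(D^2,1)$ to itself via $\sigma\mapsto\tilde\sigma$.) Thus the proof is a one-line appeal to the already-proven Theorem \ref{thm: nofixedthm}, specialized to $S^2\times S^2$.

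With Proposition \ref{prop: nofixed_S2S2} in hand, the short proof of the Main Theorem for $S^2\times S^2$ concludes as expected: by Proposition \ref{prop: automatic_trans} every $J\in\mathcal{J}_R$ is regular, so $\mathcal{M}(L,J;x)$ is a compact $0$-manifold computing $n(L)\bmod 2$; since the free involution $\mathcal{R}$ acts on this finite set, its cardinality is even, i.e.\ $n(L)=0$.
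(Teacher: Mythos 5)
Your argument is correct: a fixed point of $\mathcal{R}$ in $\mathcal{M}(L,J;x)$ gives $R\circ u\circ\rho\circ\sigma=u$ for some $\sigma\in\Aut(D^2,1)\subset\Aut(D^2)$, and since monotone $S^2\times S^2$ with the real Lagrangian $L=\Fix(R)$, $N_L=2$, satisfies all the hypotheses of Theorem \ref{thm: nofixedthm} (the discs in question have $\mu(u)=2$, hence are simple and of positive energy by monotonicity), that theorem forbids such an identity; your remark about the smaller automorphism group is exactly the right bookkeeping. However, this is not the route the paper takes here, and the difference matters for the purpose of the appendix. The paper's proof of Proposition \ref{prop: nofixed_S2S2} is deliberately independent of Section \ref{sec: nofixedpointonmoduli}: it uses Lemma \ref{lem: embedded_disc} (embeddedness of Maslov index 2 discs in $S^2\times S^2$, obtained by doubling and the adjunction inequality) to note that a fixed point would make $\im u$ an $R$-invariant embedded disc, so that $\psi:=u^{-1}\circ R\circ u$ is an antiholomorphic diffeomorphism of $D^2$ restricting to the identity on $\partial D^2$ (because $u(\partial D^2)\subset L=\Fix(R)$), which is impossible. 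This four-dimensional argument avoids entirely the machinery behind Theorem \ref{thm: nofixedthm} (Lazzarini-type simplicity, the splitting of Lemmas \ref{lem: rhoisinvolution} and \ref{lem: 2discsdecomp} with removable boundary singularities), which is the whole point of the ``simple proof'' in the appendix; your version is logically shorter on the page but imports the heavier Section~\ref{sec: symmetriccount} results, so it buys brevity at the cost of self-containedness, whereas the paper's buys an elementary, dimension-specific argument at the cost of invoking embeddedness and automatic transversality. Your concluding paragraph (free involution on a finite set forces even count, hence $n(L)=0$) matches the paper.
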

\begin{proof}
Assume to the contrary that $[u]\in \mathcal{M}(L,J;x)$ is a fixed point of $\mathcal{R}$. Then the image of $u$ is $R$-invariant, i.e., $\im u=R(\im u)$. Since $u$ is embedded by Lemma \ref{lem: embedded_disc}, we can define the antiholomorphic diffeomorphism of $D^2$,
	$$
	\psi:=u^{-1}\circ R\circ u.
	$$
Using the fact that $u(z)\in L=\Fix(R)$ for all $z\in \p D^2$, we see that $\psi|_{\p D^2}=\id$. Since there is no antiholomorphic diffeomorphism on $D^2$ which is the identity on the boundary, this is a contradiction.
\end{proof}
As a result, we obtain a simplified proof of the main theorem for $S^2\times S^2$, which is enough to prove Theorem \ref{thm: thmA}.

\subsection{The doubling construction}\label{sec: app_doubling}
Inspired by the doubling construction of Hofer--Lizan--Sikorav, we explain an alternative proof of equivariant transversality addressed in Section \ref{sec: eqtrans}.

Let $(M,\ow)$ be a closed spherically monotone symplectic manifold and let $L=\Fix(R)$ be a  real Lagrangian with $N_L= 2$ in $M$. Fix $J\in \mathcal{J}_R^\ell\subset \mathcal{J}^\ell$ of class~$C^\ell$ for a large $\ell\ge 1$. A \emph{real rational curve} $u'\colon S^2\to M$ is a $J$-holomorphic sphere $u'$ satisfying $R\circ u'\circ \rho=u'$, where $\rho(z)=\bar{z}^{-1}$ is the antiholomorphic involution on $S^2\cong \C P^1$. For a $J$-holomorphic disc $u\colon  (D^2,\p D^2)\to (M,L)$ of Maslov index 2 we write $u^\sharp\colon S^2\to M$ for its double, which is a real rational curve of Chern number 2.
Note that the minimal Chern number of $M$ is 2 as $N_L=2$. 
Since $J$-holomorphic spheres of Chern number 2 are necessarily simple, the corresponding \emph{universal moduli space},
$$
\mathcal{M}(\mathcal{J}^\ell)=\{(u'\colon S^2\to M,J)\mid \bar{\p}_J(u')=0,\ c_1(u')=2,\ J\in \mathcal{J}
^\ell\}
$$
is a Banach manifold, see  \cite[Proposition~3.2.1]{McSalJholo}. Consider the involution $\mathcal{R}'$ of $\mathcal{M}(\mathcal{J}^\ell)$ defined by
$$
\mathcal{R}'(u',J):=(R\circ u'\circ \rho,-R^*J).
$$
Its fixed point set 
$$
\Fix(\mathcal{R}')=\{(u',J)\in \mathcal{M}(\mathcal{J}^\ell)\mid R\circ u'\circ \rho=u',\ J\in \mathcal{J}_R^\ell\}
$$
defines the \emph{universal moduli space of real rational curves of Chern number 2}, which is a Banach submanifold of $\mathcal{M}(\mathcal{J}^\ell)$, see \cite[Proposition~1.9]{Welsch}. Hence, we can consider the projection
$$
\pi'\colon \Fix(\mathcal{R}')\to \mathcal{J}_R^\ell,\quad \pi'(u',J)=J,
$$
whose linearization $d\pi'$ at $(u',J)$ is a Fredholm operator with the same index as~${\bf D}_{u'}$. Here the operator ${\bf D}_{u'}\colon W^{1,p}(u'^*TM)\to L^p(\Lambda^{0,1}\otimes_J u'^*TM)$ is given by the linearization of the holomorphic curve equation for the case of spheres. By the Sard--Smale theorem, the set of regular values of $\pi'$ is a Baire subset~$\mathcal{J}_R^{\ell,\reg}$ of $\mathcal{J}_R^\ell$. 

Now suppose that $u$ is a Maslov index 2 $J$-holomorphic disc with boundary on $L$. Then the doubling construction of Hofer--Lizan--Sikorav \cite[Section~4 and Proposition~on pg.\ 158]{HoferLizanSikorav} says that if the operator
$$
{\bf D}_{u^\sharp}\colon W^{1,p}((u^\sharp)^*TM)\to L^p(\Lambda^{0,1}\otimes_J(u^\sharp)^*TM)
$$
is surjective, then so is the operator corresponding to the disc $u$,
$$
{\bf D}_u\colon W^{1,p}(u^*TM,u|_{\p D^2}^*TL)\to L^p(\Lambda^{0,1}\otimes_J u^*TM).
$$
This reproves Theorem \ref{thm: equitrans} with respect to the $C^\ell$-topology, and Taubes' trick completes the proof.

\subsection*{Acknowledgement}
The author cordially thanks Jo\'{e} Brendel, Hakho Choi, Urs Frauenfelder, Grigory Mikhalkin, and Felix Schlenk for fruitful discussions and suggestions. The author also thanks the anonymous referee whose kind suggestions have much improved this paper. This paper has grown up when the author stayed at the Institut de Math\'{e}matiques at Neuch\^{a}tel in February 2019. The author is grateful for its warm hospitality. This work is supported by Samsung Science and Technology Foundation under Project Number SSTF-BA1901-01.

\noindent
\small{Korea Institute for Advanced Study, 85 Hoegiro, Dongdaemun-gu, Seoul 02455, Republic of Korea\vspace{0.1cm}\\
\emph{E-mail address: }\texttt{joontae@kias.re.kr}
}

\end{document}